%% file: 2026Global_in_Time_arXiv.tex
\documentclass[hidelinks,onefignum,onetabnum]{siamart251216}

\usepackage{color}
\usepackage{cleveref}
\usepackage{mathtools} 

\usepackage{amsmath}
\usepackage{amssymb}

\newcommand{\dx}{\,{\mathrm d}x}

\newcommand{\SN}{{\mathcal{S}_{\mathcal{N}}}(\tau)}
\newcommand{\SLt}{{\mathcal{S}_{\mathcal{L}}\left(\frac{\tau}{2}\right)}}

\usepackage{lipsum}
\usepackage{amsfonts}
\usepackage{graphicx}
\usepackage{epstopdf}
\usepackage{algorithmic}
\usepackage{xcolor}
\usepackage{booktabs}
\usepackage{enumerate}

\ifpdf
  \DeclareGraphicsExtensions{.eps,.pdf,.png,.jpg}
\else
  \DeclareGraphicsExtensions{.eps}
\fi

\newsiamremark{hypothesis}{Hypothesis}
\crefname{hypothesis}{Hypothesis}{Hypotheses}
\newsiamthm{claim}{Claim}

\headers{Optimal \(\varepsilon\)-dependence}{R. Luo, C. Quan, Z. Tan, and Y. Wu}

\title{Stability and Error Estimates with Optimal $\varepsilon$-Dependence for Strang Splitting of the Allen--Cahn Equation
}

\author{
Rongfu Luo\thanks{School of Science and Engineering,  The Chinese University of Hong Kong (Shenzhen), Shenzhen, 518172, Guangdong, China (\email{rongfuluo@link.cuhk.edu.cn}).}
\and Chaoyu Quan\thanks{School of Science and Engineering, The Chinese University of Hong Kong (Shenzhen), Shenzhen
518172, China; Shenzhen International Center for Industrial and Applied Mathematics, Shenzhen
Research Institute of Big Data, Shenzhen, 518000, China (\email{quanchaoyu@cuhk.edu.cn}).}
\and Zhijun Tan\thanks{Sun Yat-sen University, School of Computer Science and Engineering, Guangzhou 510006, China (\email{tzhij@mail.sysu.edu.cn}).}
\and Yanyao Wu\thanks{Sun Yat-sen University, School of Computer Science and Engineering, Guangzhou 510006, China (\email{wuyy96@mail2.sysu.edu.cn}).}
}

\usepackage{amsopn}

\ifpdf
\hypersetup{
  pdftitle={$\varepsilon$-dependence of long-time Sobolev estimate},
  pdfauthor={R. Luo, C. Quan, Z. Tan, and Y. Wu}
}
\fi

\begin{document}

\maketitle

\begin{abstract}
In this work, long-time Sobolev stability and error estimates with optimal $\varepsilon$-dependence are established for the Strang splitting method applied to the Allen--Cahn equation with periodic boundary conditions. By combining the modified energy dissipation law, and H\"older--Gagliardo--Nirenberg product estimates, the numerical solution is proven to admit uniform-in-time $H^r$-seminorm bounds $\mathcal{O}(\varepsilon^{-r}+1)$, consistent with the corresponding Sobolev bounds of the exact solution. A second-order error estimate $\mathcal{O}(\varepsilon^2\tau^2)$ is further established on any prescribed finite time interval, exhibiting quadratic dependence on $\varepsilon$. Numerical experiments are presented to verify the optimal $\varepsilon$-dependence of both the stability and error estimates.
\end{abstract}

\begin{keywords}
Allen--Cahn equation, Strang splitting, modified energy dissipation, long-time stability, error estimates
\end{keywords}

\begin{MSCcodes}
65M12, 65M15
\end{MSCcodes}

\section{Introduction}
The Allen--Cahn (AC) equation \cite{allen1979microscopic}
\begin{equation}
    \left\{
    \begin{aligned}
        &\partial_t u =\varepsilon^2 \Delta u-f(u), && (t,x)\in (0,T)\times \Omega, \\
        & u(0,x) = u^0(x), &&\mbox{in } \Omega, \\
    \end{aligned}
    \right.
    \label{eq:AC}
\end{equation}
with Ginzburg--Landau energy functional
\begin{align}
    E(u) = \int_{\Omega} \left(\frac{\varepsilon^2}{2}|\nabla u|^2 + F(u)\right)  \dx,
\end{align}
is a model used to describe phase transitions and interfacial evolution.
It is assumed that the spatial domain $\Omega$ is a bounded open set in ${\mathbb{R}^d}~(d \le 3)$ with periodic boundary condition, $f(u)=u^3-u$ is taken as the derivative of the energy function $F(u)=\frac{1}{4}(u^2-1)^2$, and the parameter $0<\varepsilon\leq 1$ characterizes the thickness of the diffuse interface. 
Due to the double-well structure of \(F(u)\), the solution \(u\) is determined by time-dependent interfaces \(\Gamma(t):=\{x\in\Omega:u(x,t)=0\}\) as \(\varepsilon\to0\), which separate the two phases. The solution evolves between the two regions within the length \(O(\varepsilon)\). 

At the continuous level, the long-time Sobolev estimates of the exact solution for the Allen--Cahn equation have been well studied, as well as the explicit dependence on \(\varepsilon\). 
It is known that as \(\varepsilon\to0\), the diffuse interface generated by the Allen--Cahn equation converges to a sharp interface evolving by mean curvature flow \cite{evans1992,de1995geometrical}. The spectral stability of the linearized Allen--Cahn operator for sufficiently regular interfaces is given in  \cite{chen1994spectrum,de1995geometrical}, which has played an important role in obtaining error estimates with polynomial dependence on \(\varepsilon^{-1}\). These continuous-level results motivate the development of numerical methods that preserve long-time Sobolev stability and maintain an explicit dependence on \(\varepsilon\) consistently.

At the discrete level, as pointed out by Blowey and Elliott, a direct Gr\"onwall inequality argument leads to an error estimate with constant \(e^{T/\varepsilon^2}\) that could be extremely large when \(\varepsilon\) is small \cite[equation 4.10]{blowey1993curvature}. 
To overcome this difficulty, Feng and Prohl employed the spectral estimate for the linearized AC operator established in \cite{chen1994spectrum}, together with its discrete counterpart, to derive error estimates with only polynomial dependence on \(\varepsilon^{-1}\) for the backward Euler time discretization and its fully discrete realization using conforming piecewise linear finite elements in space \cite{feng2003numerical}.
This technique was also used to remove the exponential dependence on $\varepsilon^{-1}$, including the implicit Euler method for the nonisothermal AC equations \cite{feng2004b}, stabilized semi-implicit schemes \cite{yang2009} and a posteriori error estimates \cite{bartels2011robust,chrysafinos2020posteriori} for the AC equation. These works concern the implicit and semi-implicit Euler methods and derive error estimates in the form of $O\left(\left(\tau+h^r\right) \varepsilon^{-\sigma}\right)$, where $\tau$ is the time step, $h$ is the spacing size, and $\sigma$ is some positive number. 
Recently, Akrivis and Li derive in \cite{akrivis2022error} fully discrete error estimates for high-order backward differentiation formula (BDF) finite element methods of the AC equation with
explicit dependence on $\varepsilon$. Using the spectral estimate for the linearized AC operator,
they obtained an \(L^2\)-norm error bound
\(
O\bigl(
\tau^q\varepsilon^{-q-\frac12}
+h^r\varepsilon^{-r-\frac12}
+e^{-c/\varepsilon}
\bigr),
\)
where \(1\le q\le5\) denotes the BDF order and \(r-1\) denotes the polynomial degree of the finite element space. 
In the literature reviewed above, the Allen--Cahn equation is typically written in the rescaled form
\[
\partial_t u = \Delta u - \frac{1}{\varepsilon^2} f(u).
\]
For the error estimates obtained in this rescaled setting, the $\varepsilon^{-\sigma}$-dependence appears to be an artifact of the analytical techniques employed, which raises a natural question how to achieve the optimal $\varepsilon$-dependence error estimates that is consistent with numerical observation.


In this work, we take the second-order Strang splitting method \cite{strang1968construction} as an example, to establish the long-time Sobolev stability and error estimates with optimal \(\varepsilon\)-dependence for the AC equation, which improves the results in \cite{li2022stabilityI}. The Strang splitting is written as
\begin{align} \label{eq:strang}
    u^{n+1} = \mathcal S(\tau) u^n = \SLt \SN\SLt u^n,
\end{align}
where $0<\tau \leq 1$ is the time step, $\SLt =\exp\left(\frac{\tau}{2}\varepsilon^2\Delta\right)$ is the linear propagator and $\mathcal{S}_{\mathcal{N}}(\tau)$ is the solution operator for the nonlinear part 
    \begin{align} \label{eq:SN(v)}
        \partial_t u = u-u^3.
    \end{align}
Based on the unconditional energy dissipation law established in \cite{li2022stabilityI,quan2026unconditional}, we use a H\"older--Gagliardo--Nirenberg product inequality to derive the following uniform-in-time bound of numerical solutions:
\[
    \sup_{n\geq0}\|u^n\|_{H^r}
    \leq
    C_{r,u^0}\bigl(\varepsilon^{-r}+1\bigr),
\]
where \(C_{r,u^0}\) depends only on \(\Omega\), \(d\), \(r\), and the initial data $u^0$. It is shown that this bound is consistent with the bound of exact solution derived from PDE analysis. Further, we employ the iterated Lie commutators \cite{lubich2008splitting, jahnke2000error} to derive local consistency errors in the \(H^r\)-norm and combine it with the preceding uniform stability bounds to derive
\[
    \sup_{n\tau\leq T}
    \|u^n-u(t_n)\|_{H^r}
    \leq
    C_{r,T,u^0}\varepsilon^{2}\tau^2,
\]
where $u^n$ and $u(t_n)$ are the numerical and exact solutions respectively, \(C_{r,T,u^0}\) depends only on $\Omega$, $d$, $r$, $T$, and $\|u^0\|_{H^{r+4}\cap W^{4,\infty}}$. 
Several numerical experiments are conducted to verify the predicted second-order temporal accuracy, the maximum principle, the energy dissipation property, 
the optimal dependence on \(\varepsilon\) of the uniform-in-time Sobolev bounds and the error estimate.

The remainder of the paper is organized as follows. In section \ref{sec:bound continuous}, we introduce some preliminaries and show the long-time Sobolev bounds of the exact solution of the AC solution. Section \ref{sec:bound} proves the long-time \(H^r\)-norm bound of the numerical solution \(u^n\) with dependency on $\varepsilon^{-1}$ consistent with the continuous level. Section \ref{sec:error} establishes the consistency and global error analysis, which proves second-order convergence with optimal dependence on \(\varepsilon\). In section \ref{sec:numerical}, we present several numerical experiments to validate the long-time Sobolev stability and the optimal \(\varepsilon\)-dependent error estimates. Finally, we conclude the paper in section \ref{sec:conclusion}.

\section{Long-time Sobolev bounds of exact solution}
\label{sec:bound continuous}

We shall use the following definition of the $H^r$ norm:
\begin{equation}\label{def:Hknorm}
    \left\| u \right\|_{H^r}^2
    \coloneqq
    \sum_{|\alpha|\le r} \left\|D^{\alpha}u\right\|_{L^2}^2,
\end{equation}
where \(\alpha=(i_1,i_2,\cdots,i_d)\in\mathbb N^d\) is a multi-index, \(|\alpha| \coloneqq \sum_{j=1}^d i_j\) and
\begin{equation}
\begin{aligned}
    D^{\alpha} u(t,x)
    :=
    \frac{\partial^{|\alpha|}u(t,x)}
    {\partial x_1^{i_1}\partial x_2^{i_2}\cdots\partial x_d^{i_d}} .
\end{aligned}
\label{eq:multiindex}
\end{equation}
Let \(-\Delta\) denote the Laplace operator under periodic boundary conditions. We define
\begin{equation}\label{def:Lambda}
    \Lambda:=(-\Delta)^{1/2}
\end{equation}
as the square root of \(-\Delta\).
More precisely, let \(\{\phi_j\}_{j\ge0}\) be an orthonormal basis of \(L^2(\Omega)\) consisting of eigenfunctions of \(-\Delta\), and let \(\{\lambda_j\}_{j\ge0}\) be the corresponding eigenvalues, namely
\begin{equation}\label{def:eigenvalue}
        -\Delta\phi_j=\lambda_j\phi_j,
    \qquad
    0\leq \lambda_0\le \lambda_1\le \lambda_2\le\cdots .
\end{equation}
As a consequence, when
\[
    v=\sum_{j\ge0}v_j\phi_j,
\]
we define the seminorm
\begin{equation}\label{def:seminorm}
    |v|_r
    :=
    \|\Lambda^r v\|_{L^2}
    =
    \left(
    \sum_{j\ge0}\lambda_j^r |v_j|^2
    \right)^{1/2},
    \qquad r\ge0
\end{equation}
and the norm
\begin{equation}\label{def:norm}
    \|v\|_{r,\Lambda}
    :=
    \left(
    \|v\|_{L^2}^2+|v|_r^2
    \right)^{1/2}.
\end{equation}

For Strang splitting method \eqref{eq:strang}, $\SLt =\exp\left(\frac{\tau}{2}\varepsilon^2\Delta\right)$ is the linear propagator and $\mathcal{S}_{\mathcal{N}}$ is the solution operator for the nonlinear part 
\begin{align} \label{eq:SN(v)}
    \partial_t u = u-u^3.
\end{align}
 Here, $\SN v$ can be calculated explicitly by solving an ordinary differential equation if the initial condition $v$ is given.
 The Strang splitting scheme defines three intermediate steps:
\begin{align}
u^{n,(1)}  &= e^{\frac{\tau}{2}\varepsilon^{2}\Delta} u^n, \label{eq:step1_1}\\
u^{n,(2)}  &= \frac{e^{\tau}u^{n,(1)}}{\sqrt{1+(e^{2\tau}-1)(u^{n,(1)})^{2}}} , \label{eq:step2_1}\\
u^{n+1}    &= e^{\frac{\tau}{2}\varepsilon^{2}\Delta} u^{n,(2)}. \label{eq:step3_1}
\end{align}

\begin{lemma}[Equivalence of Sobolev norms]
\label{lem:sobolev_equivalence}
Let \(r\ge0\) be an integer. With the definition \eqref{def:Lambda} of \(\Lambda\), there exist constants \(c_r,\tilde c_r>0\), depending only on \(\Omega\), \(d\), and \(r\), such that
\begin{equation}
    c_r\|u\|_{H^r}
    \le
    \|u\|_{r,\Lambda}
    \le
    \tilde c_r\|u\|_{H^r},
    \qquad
    \forall u\in H^r(\Omega).
    \label{eq:spectral_sobolev_equivalence}
\end{equation}
\end{lemma}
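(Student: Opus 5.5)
We prove this with Fourier series. The domain $\Omega$ is a periodic box, say $\prod_{j=1}^d(0,L_j)$, so the eigenfunctions of $-\Delta$ can be taken to be the normalized exponentials $\phi_k(x)=|\Omega|^{-1/2}e^{i\xi_k\cdot x}$. Here $\xi_k=(2\pi k_1/L_1,\dots,2\pi k_d/L_d)$ with $k\in\mathbb Z^d$, and the eigenvalues are $\lambda_k=|\xi_k|^2$. If real eigenfunctions are preferred, sines and cosines span the same eigenspaces. Since the seminorm $|v|_r$ in \eqref{def:seminorm} depends only on the spectral projections onto eigenspaces, it does not matter which orthonormal eigenbasis is used. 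By Parseval, every quantity in \eqref{eq:spectral_sobolev_equivalence} becomes a weighted $\ell^2$ sum of the coefficients $\hat u_k$. Derivatives act diagonally:
\[
\|D^\alpha u\|_{L^2}^2=\sum_k |\xi_k^\alpha|^2|\hat u_k|^2,\qquad \xi_k^\alpha=\prod_j (\xi_{k,j})^{\alpha_j}.
\]
Hence
\[
\|u\|_{H^r}^2=\sum_k m_r(\xi_k)|\hat u_k|^2,\quad m_r(\xi)=\sum_{|\alpha|\le r}|\xi^\alpha|^2,
\qquad
\|u\|_{r,\Lambda}^2=\sum_k (1+|\xi_k|^{2r})|\hat u_k|^2 .
\]
This uses $\lambda_k^r=|\xi_k|^{2r}$.

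The lemma therefore reduces to a pointwise comparison of the two weights: we need constants $a_r,b_r>0$ with
\[
a_r(1+|\xi|^{2r})\le m_r(\xi)\le b_r(1+|\xi|^{2r})\qquad\text{for all }\xi\in\mathbb R^d .
\]

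For the upper bound on $m_r$, note that for $|\alpha|=s\le r$ we have $|\xi^\alpha|^2\le|\xi|^{2s}$. Moreover $|\xi|^{2s}\le 1+|\xi|^{2r}$, which follows by considering the cases $|\xi|\le1$ and $|\xi|\ge1$ separately. Summing over the finitely many multi-indices gives $b_r=\#\{\alpha:|\alpha|\le r\}$.

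For the lower bound, the term $\alpha=0$ contributes $1$ to $m_r$. The terms with $|\alpha|=r$ contribute $\sum_{|\alpha|=r}|\xi^\alpha|^2$. We compare this with the multinomial expansion
\[
|\xi|^{2r}=\Bigl(\sum_j\xi_j^2\Bigr)^r=\sum_{|\alpha|=r}\binom{r}{\alpha}|\xi^\alpha|^2\le r!\sum_{|\alpha|=r}|\xi^\alpha|^2 ,
\]
which gives $m_r(\xi)\ge 1+|\xi|^{2r}/r!$, so $a_r=1/r!$ works. Taking square roots then yields \eqref{eq:spectral_sobolev_equivalence}. The resulting constants depend only on $d$ and $r$, and the domain enters only through the lattice $\{\xi_k\}$. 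All of this is elementary.

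The only genuine point needing care is the justification of the Fourier framework. We must confirm that the exponentials form an orthonormal eigenbasis for the periodic Laplacian. We must also check that for $u\in H^r(\Omega)$ the weak derivatives satisfy $\widehat{D^\alpha u}_k=(i\xi_k)^\alpha\hat u_k$. This follows by testing against $\overline{\phi_k}$ and integrating by parts with no boundary terms, thanks to periodicity. It also shows that both sides of \eqref{eq:spectral_sobolev_equivalence} are finite exactly for $u\in H^r$.

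If the paper intends a more general periodic setting, such as a torus that is not a rectangular box, the same argument applies with $\xi_k$ ranging over the dual lattice. Only the eigenvalue formula $\lambda_k=|\xi_k|^2$ is used.
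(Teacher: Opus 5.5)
Your proof is correct, and it takes a more elementary route than the paper. The paper omits the details and only points to the spectral characterization of $|\cdot|_r$ together with elliptic regularity for the Laplacian. The standard way to fill that in is to bound $\|u\|_{H^{2k}}$ by $\|(-\Delta)^k u\|_{L^2}+\|u\|_{L^2}$, handle odd orders through $\nabla(-\Delta)^k u$, and take the constants from regularity theory, so they depend on $\Omega$.

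You bypass elliptic regularity altogether. On a flat torus every $D^\alpha$ is itself a Fourier multiplier, so both norms are diagonal in the same basis. The lemma then reduces to the symbol inequality $\frac{1}{r!}(1+|\xi|^{2r})\le m_r(\xi)\le \#\{\alpha:|\alpha|\le r\}\,(1+|\xi|^{2r})$, which you verify cleanly via the multinomial expansion. Your approach gives a self-contained proof with explicit constants that depend only on $d$ and $r$, which is stronger than the $\Omega$-dependence the statement allows. The paper's route is portable to settings where the eigenbasis diagonalizes $\Delta$ but not the individual derivatives, for example Neumann problems on general domains, which the paper's remark and cosine expansions allude to.

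Two small points. First, for $r=0$ the $\alpha=0$ term and the $|\alpha|=r$ term coincide, so your lower bound double-counts it: $m_0\equiv 1$, not $m_0\ge 2$. There you simply take $a_0=1/2$. Second, as you implicitly do when you say there are no boundary terms, $H^r(\Omega)$ must be read as the periodic space. For a nonperiodic $u\in H^1(0,2\pi)$ such as $u(x)=x$, the boundary terms do not vanish and the Fourier coefficients decay only like $1/|k|$. Then $|u|_1=\infty$, so the upper inequality fails. The paper leaves this implicit as well.
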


\begin{proof}
The result follows from the spectral characterization of Sobolev norms, together with the standard elliptic regularity estimate for the corresponding Laplace operator. We omit the details.
\end{proof}

Now we establish the following Hölder--Gagliardo--Nirenberg product
estimate, which will be used repeatedly in the subsequent analysis.

\begin{lemma}[H\"older--Gagliardo--Nirenberg product estimate]
\label{lem:HGN}
Let $k\geq 1$ be an integer. Suppose that
\(v_i\in H^k(\Omega)\cap L^\infty(\Omega)\),
\(1\leq i\leq q\), and \(
\max_{1\leq i\leq q}\|v_i\|_{L^\infty}\leq 1.
\)
Let $\beta_i\in\mathbb{N}^d$, $1\leq i\leq q$, with
\(\beta=\sum_{i=1}^q\beta_i\) and 
\(m:=|\beta|\leq k\).
Denote \(
p_m:=\frac{2k}{m},
\)
and $p_0=\infty$. 
For $0\leq m\leq k$, we then have
\begin{equation}
\left\|
\prod_{i=1}^q D^{\beta_i}v_i
\right\|_{L^{p_m}}
\leq
C_{\beta}\,\max_{1\leq i\leq q}\|v_i\|_{H^k}^{\frac m k}.
\label{eq:product l 2r/m}
\end{equation}
Moreover, let $r\in\mathbb{N}$ satisfy
\(k=r+m\geq 1\). The following $H^r$-norm bound holds
\begin{equation}
\left\|
\prod_{i=1}^q D^{\beta_i}v_i
\right\|_{H^r}
\leq
\tilde C_{\beta}\,\max_{1\leq i\leq q}\|v_i\|_{H^k}.
\label{eq:product estimate hr}
\end{equation}
Here, \(C_{\beta},\,\tilde C_{\beta}>0\) depend only on
$\Omega$, $d$, $k$, $q$, and the multi-indices
$\beta_1,\ldots,\beta_q$.
\end{lemma}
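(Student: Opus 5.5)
The plan is to prove \eqref{eq:product l 2r/m} by Hölder's inequality combined with the Gagliardo--Nirenberg interpolation inequality on the periodic domain. Then \eqref{eq:product estimate hr} follows by applying \eqref{eq:product l 2r/m} to every term produced by the Leibniz rule.

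\textbf{Step 1: the $L^{p_m}$ bound.} Set $m_i:=|\beta_i|$, so $\sum_i m_i=m$. The case $m=0$ is immediate, since each factor is bounded by $1$ in $L^\infty$. For $m\ge1$, split the indices into those with $m_i=0$, bounded trivially in $L^\infty$, and those with $m_i\ge1$. For the latter, choose exponents $p_{m_i}=2k/m_i$. Since $\sum_i 1/p_{m_i}=m/(2k)=1/p_m$, Hölder's inequality gives
\[
\Bigl\|\prod_i D^{\beta_i}v_i\Bigr\|_{L^{p_m}}\le\prod_{m_i\ge1}\|D^{\beta_i}v_i\|_{L^{2k/m_i}}.
\]
Each factor is then controlled by the Gagliardo--Nirenberg inequality
\[
\|D^{j}v\|_{L^{2k/j}}\le C\|v\|_{L^\infty}^{1-j/k}\,\|v\|_{H^k}^{j/k},\qquad 0<j\le k,
\]
which holds in any dimension because the scaling is exactly balanced between $L^\infty$ and $\dot H^k$. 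On a bounded periodic domain one uses the inhomogeneous form, with $\|v\|_{H^k}$ on the right so that constants and mean values are absorbed. Since $\|v_i\|_{L^\infty}\le1$, the product is at most $C\prod\|v_i\|_{H^k}^{m_i/k}\le C\max_i\|v_i\|_{H^k}^{m/k}$.

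\textbf{Main obstacle.} The delicate point is the endpoint $j=k$, where $2k/j=2$ and the inequality is trivial, together with the validity of the $L^\infty$--$H^k$ interpolation on the torus. For the latter I would cite the Nirenberg/Brezis--Mironescu form, noting that the lower-order term $\|v\|_{L^\infty}$ handles the non-homogeneous case. A second subtlety is that $\max\|v_i\|_{H^k}$ might be smaller than $1$. This causes no difficulty in Step 1, because the exponents $m_i/k$ sum exactly to $m/k$, so no normalization is needed.

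\textbf{Step 2: the $H^r$ bound.} By the definition \eqref{def:Hknorm}, it suffices to bound $\|D^\gamma\prod_i D^{\beta_i}v_i\|_{L^2}$ for every $|\gamma|\le r$. The Leibniz rule writes this derivative as a finite combination of products $\prod_i D^{\beta_i+\gamma_i}v_i$ with $\sum\gamma_i=\gamma$, so the total order is $m':=m+|\gamma|\le r+m=k$. Applying \eqref{eq:product l 2r/m} to each such term yields a bound in $L^{2k/m'}$. Because $\Omega$ is bounded, $2k/m'\ge2$ embeds into $L^2$. Each term is therefore bounded by $C\max_i\|v_i\|_{H^k}^{m'/k}$.

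It remains to convert $\max\|v_i\|_{H^k}^{m'/k}$ into the first power. The $H^k$ norm dominates the $L^2$ norm, and the $L^2$ norm is not necessarily at least $1$, so the conversion does not follow from a lower bound on the norm. Instead I would reorganize the estimate in Step 1: take $m'=k$, apply Hölder with exponents $2k/m_i'$, and use $\|v\|_{L^\infty}\le1$ with the exact exponent split. The top-order term $|\gamma|=r$ then has total order $k$ and yields exactly $\max\|v\|_{H^k}^{1}$. For lower-order terms with $m'<k$, I would write the bound as $\|v\|_{H^k}^{m'/k}$ and use $\|v\|_{H^k}^{\theta}\le 1+\|v\|_{H^k}$. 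If the constant $1$ must be absorbed, this is where the argument is weakest, and I would instead note $\|v\|_{H^k}\ge c\|v\|_{L^2}$ or simply allow $\tilde C_\beta(1+\max\|v_i\|_{H^k})$. Summing over finitely many $\gamma$ and Leibniz terms then gives \eqref{eq:product estimate hr}.
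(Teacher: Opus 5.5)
Your Step 1 is the paper's argument: H\"older with exponents $2k/|\beta_i|$ plus the $L^\infty$--$H^k$ Gagliardo--Nirenberg inequality. Step 2, however, has a genuine gap, which you partly flag yourself. Applying \eqref{eq:product l 2r/m} at the fixed level $k$ to a Leibniz term of total order $m'=m+|\gamma|<k$ gives only $C\max_i\|v_i\|_{H^k}^{m'/k}$. For $m'=0$ (i.e.\ $m=0$, $\gamma=0$), passing from $L^\infty$ to $L^2$ gives merely a constant. Neither can be converted into $\tilde C_\beta\max_i\|v_i\|_{H^k}$, because $M^{m'/k}/M\to\infty$ as $M\to0$; take $v_i=\delta\phi$ with $\phi$ fixed and $\delta\to0$. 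Your fallbacks do not repair this. The inequality $\|v\|_{H^k}\ge c\|v\|_{L^2}$ gives no absolute lower bound on $M$, so the additive $1$ in $M^\theta\le 1+M$ cannot be absorbed. And $\tilde C_\beta(1+M)$ is strictly weaker than \eqref{eq:product estimate hr}; the paper later uses the linear form $\|w^3\|_{H^m}\le C\|w\|_{H^m}$ in its Gronwall argument.

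The missing idea is to interpolate at the order of each term rather than at level $k$. For a Leibniz term $\prod_i D^{\beta_i+\gamma_i}v_i$ of total order $s=m+|\gamma|>0$, use H\"older with exponents $2s/|\beta_i+\gamma_i|$. Their reciprocals sum to $1/2$, so the product lands directly in $L^2$. Combine this with the Gagliardo--Nirenberg inequality between $L^\infty$ and $H^s$:
\[
\|D^{\beta_i+\gamma_i}v_i\|_{L^{2s/|\beta_i+\gamma_i|}}\le C\|v_i\|_{L^\infty}^{1-|\beta_i+\gamma_i|/s}\|v_i\|_{H^s}^{|\beta_i+\gamma_i|/s}.
\]
The exponents on $\|v_i\|_{H^s}$ now sum to exactly $1$. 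With $\|v_i\|_{L^\infty}\le 1$ and $\|v_i\|_{H^s}\le\|v_i\|_{H^k}\le M$, each term is bounded by $CM$. There is no power loss and no need for the embedding $L^{2k/m'}\subset L^2$. For $s=0$, put one factor in $L^2$ and the rest in $L^\infty$: $\|\prod_i v_i\|_{L^2}\le\|v_1\|_{L^2}\prod_{i\ge2}\|v_i\|_{L^\infty}\le M$. This is exactly how the paper proves \eqref{eq:product estimate hr}, by applying its Step 1 estimate with $k$ replaced by $s$.
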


\begin{proof}
Let \(M := \max_{1\leq i\leq q}\|v_i\|_{H^k}\). The case $m=0$ follows immediately from the $L^\infty$-norm bounds. We next consider the case $m>0$. Define
\[
p_i:=
\begin{cases}
\dfrac{2k}{|\beta_i|}, & \text{if }|\beta_i|>0,\\[1mm]
\infty, & \text{if }|\beta_i|=0,
\end{cases}
\qquad i=1,\ldots,q.
\]
The Gagliardo--Nirenberg interpolation inequality
\cite{nirenberg1959elliptic} gives, for $|\beta_i|>0$,
\[
\|D^{\beta_i}v_i\|_{L^{p_i}}
\leq
C_{k,q,\beta}^{(1)}
\|v_i\|_{L^\infty}^{1-|\beta_i|/k}
\|v_i\|_{H^k}^{|\beta_i|/k},
\]
where \(C_{k,q,\beta}^{(1)}\) depends only on
\(\Omega,\,d,\,k,\,q\), and \(\beta_1,\,\ldots,\,\beta_q\).
Since
\(
\sum_{i=1}^q\frac{1}{p_i}
=
\frac{1}{p_m},
\)
using the H\"older's inequality and this  Gagliardo--Nirenberg interpolation inequality yields
\begin{equation}
\left\|
\prod_{i=1}^q D^{\beta_i}v_i
\right\|_{L^{p_m}}
\leq
C_{k,q,\beta}^{(2)}
\prod_{\{i:\,|\beta_i|=0\}}\|v_i\|_{L^\infty}
\prod_{\{i:\,|\beta_i|>0\}}
\|v_i\|_{L^\infty}^{1-|\beta_i|/k}
\|v_i\|_{H^k}^{|\beta_i|/k},
\label{eq:product estimate}
\end{equation}
where \(C_{k,q,\beta}^{(2)}\) depends only on
\(\Omega,\,d,\,k,\,q\), and \(\beta_1,\ldots,\beta_q\).
Since \(\max_{1\leq i\leq q}\|v_i\|_{L^\infty}\leq 1\), we can obtain \eqref{eq:product l 2r/m} directly from \eqref{eq:product estimate}:
\[
\begin{aligned}
\left\|
\prod_{i=1}^qD^{\beta_i}v_i
\right\|_{L^{p_m}}
&\leq
C_{\beta}\,
M^{\sum_{i=1}^q|\beta_i|/k}
\leq
C_{\beta}\,M^{m/k}.
\end{aligned}
\]
For the second estimate, let \(\alpha\in\mathbb{N}^d\), with
\(n:=|\alpha|\leq r\). By the Leibniz rule,
\[
D^\alpha\left(\prod_{i=1}^qD^{\beta_i}v_i\right)
=
\sum_{\substack{\alpha_1,\ldots,\alpha_q\in\mathbb{N}^d\\
\alpha_1+\cdots+\alpha_q=\alpha}}\frac{\alpha!}{\alpha_1!\cdots\alpha_q!}
\prod_{i=1}^qD^{\beta_i+\alpha_i}v_i.
\]
For each term, set \(s:=m+n\leq k\). If \(s=0\), then
\[
\left\|\prod_{i=1}^qv_i\right\|_{L^2}
\leq
\|v_1\|_{L^2}\prod_{i=2}^q\|v_i\|_{L^\infty}
\leq M.
\]
If \(s>0\), applying \eqref{eq:product estimate} with \(k\) replaced by \(s\) and
\(\beta_i\) replaced by \(\beta_i+\alpha_i\), and using
\(\|v_i\|_{H^s}\leq\|v_i\|_{H^k}\leq M\), we obtain
\[
\left\|
\prod_{i=1}^qD^{\beta_i+\alpha_i}v_i
\right\|_{L^2}
\leq
C_{k,q,\alpha,\beta}\,M^{\sum_{i=1}^q|\beta_i+\alpha_i|/s}
=
C_{k,q,\alpha,\beta}\,M,
\]
where \(C_{k,q,\alpha,\beta}\) depends on
\(\Omega,\,d,\,k,\,q,\,\alpha\), and \(\beta\).
Summing over \(|\alpha|\leq r\) proves
\eqref{eq:product estimate hr}.
\end{proof}

\begin{lemma}[Long-time bounds of the Allen--Cahn solution]
\label{lem:longtime-bound}
Assume that the initial data \(u^0\) satisfies
\[
    \|u^0\|_{L^\infty}\le 1,
\]
and that, for some given integer \(m\ge1\),
\[
    |u^0|_m
    < \infty.
\]
Then there exists a positive constant $C_m>0$ depending only on \(\Omega\), \(d\), \(m\), and \(|u^0|_m\) such that the solution \(u(t)\) of the Allen--Cahn equation \eqref{eq:AC} satisfies
\begin{equation}
    |u(t)|_m\leq
     C_m (\varepsilon^{-m}+1),
    \quad
    \forall\, t\ge0,~ m\ge1.
    \label{eq:exact_solution_Lambda_bound}
\end{equation}
\end{lemma}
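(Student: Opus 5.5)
We argue by induction on $m$, working throughout in the spectral seminorm $|\cdot|_m=\|\Lambda^m\cdot\|_{L^2}$ and using Lemma~\ref{lem:sobolev_equivalence} to pass to $H^m$ norms whenever Lemma~\ref{lem:HGN} is applied. Two basic facts come first. By the maximum principle for \eqref{eq:AC}, $\|u(t)\|_{L^\infty}\le1$ for all $t\ge0$. By energy dissipation,
\[
\frac{\varepsilon^2}{2}|u(t)|_1^2\le E(u(t))\le E(u^0)\le \frac{\varepsilon^2}{2}|u^0|_1^2+\tfrac14|\Omega|,
\]
so $|u(t)|_1\le C(|u^0|_1+\varepsilon^{-1})$. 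This is the case $m=1$. The $L^2$ norm is bounded by $|\Omega|^{1/2}$.

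For general $m\ge2$ we use an energy estimate for $\Lambda^m u$. Applying $\Lambda^m$ to \eqref{eq:AC} and pairing with $\Lambda^m u$ gives
\[
\frac12\frac{d}{dt}|u|_m^2+\varepsilon^2|u|_{m+1}^2=|u|_m^2-(\Lambda^{m}(u^3),\Lambda^m u).
\]
For the cubic term we move one derivative across, $|(\Lambda^m u^3,\Lambda^m u)|\le |u^3|_{m-1}|u|_{m+1}$. Lemma~\ref{lem:HGN} with $k=m-1$, $\beta_i=0$, together with $\|u\|_{L^\infty}\le1$, gives $\|u^3\|_{H^{m-1}}\le C\|u\|_{H^{m-1}}$. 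Young's inequality then yields
\[
\frac{d}{dt}|u|_m^2+\varepsilon^2|u|_{m+1}^2\le 2|u|_m^2+C\varepsilon^{-2}\|u\|_{H^{m-1}}^2 .
\]
The term $2|u|_m^2$ has the wrong sign. To control it we interpolate between the $L^2$ and $H^{m+1}$ levels:
\[
|u|_m^2\le |u|_{m-1}|u|_{m+1}\le \tfrac{\varepsilon^2}{4}|u|_{m+1}^2+\varepsilon^{-2}|u|_{m-1}^2,
\]
which follows from the spectral definition and Cauchy--Schwarz. We also add $|u|_m^2$ to both sides. Absorbing these terms gives a differential inequality of the form
\[
\frac{d}{dt}|u|_m^2+|u|_m^2\le C\varepsilon^{-2}\bigl(|u|_{m-1}^2+1\bigr).
\]
By the induction hypothesis the right-hand side is bounded uniformly in time by $C\varepsilon^{-2}(\varepsilon^{-2(m-1)}+1)$. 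Gr\"onwall's inequality in the dissipative form $y'+y\le K$, which gives $y(t)\le y(0)+K$, then yields $|u(t)|_m^2\le |u^0|_m^2+C(\varepsilon^{-2m}+1)$. This is \eqref{eq:exact_solution_Lambda_bound}. Lower-order seminorms that appear can be interpolated between $L^2$ and $|u|_{m-1}$.

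The main obstacle is producing the damping term $+|u|_m^2$ with an $\varepsilon$-independent rate. The leftover $|u|_m^2$ must be absorbed either into $\varepsilon^2|u|_{m+1}^2$ or into the gain from adding $|u|_m^2$. Both absorptions have to be done by interpolating against $|u|_{m+1}$ with weight $\varepsilon^2$, so that they cost exactly $\varepsilon^{-2}|u|_{m-1}^2$ and nothing worse. This interpolation step needs checking carefully. If it fails, the fallback is the uniform Gronwall lemma: integrate over $[t,t+1]$, and use the time-integrated dissipation $\varepsilon^2\int_t^{t+1}|u|_{m}^2\,ds\lesssim \varepsilon^{-2(m-1)}$ obtained from the $(m-1)$-level estimate. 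Either route gives the stated power $\varepsilon^{-m}$ with the constant depending only on $\Omega,d,m$ and the initial data.
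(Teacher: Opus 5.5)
Your proposal is correct. Its skeleton is the same as in Appendix~\ref{append:1}: induction on $m$ starting from energy dissipation, the $\Lambda^m$ energy identity, shifting one derivative onto $|u|_{m+1}$, Young's inequality with weight $\varepsilon^2$, and the spectral interpolation $|u|_m^2\le |u|_{m-1}|u|_{m+1}$. The genuine difference is in how that interpolation closes the estimate.

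The paper moves one derivative off the whole term $u-u^3$ and reaches $\frac{\mathrm d}{\mathrm dt}|u|_m^2+\varepsilon^2|u|_{m+1}^2\le C\varepsilon^{-2}B_{m-1}^2$. It then uses the interpolation \emph{multiplicatively}, together with the induction bound $|u|_{m-1}\le B_{m-1}$, to turn the dissipation into the quartic damping $\varepsilon^2B_{m-1}^{-2}|u|_m^4$. A Riccati comparison argument then gives $|u(t)|_m\le\max\{|u^0|_m,\,CB_{m-1}/\varepsilon\}$.

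You use the interpolation \emph{additively}: $3|u|_m^2\le \frac{3\varepsilon^2}{4}|u|_{m+1}^2+3\varepsilon^{-2}|u|_{m-1}^2$ is absorbed by the $\varepsilon^2|u|_{m+1}^2$ on the left. So the step you flagged does go through. The damping rate is $\varepsilon$-independent, and its cost $\varepsilon^{-2}|u|_{m-1}^2$ is of the same order as the cubic contribution. The uniform-Gronwall fallback is therefore unnecessary.

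Your route needs only the linear Gronwall lemma for $y'+y\le K$, and it makes the decay $e^{-t}|u^0|_m^2$ of the initial contribution explicit. The paper's route bounds $|u(t)|_m$ by the maximum, rather than the sum, of the initial value and the threshold. It is also set up to mirror the discrete barrier argument of Section~\ref{sec:bound}, where it produces the recursion $R_{r+1}(\varepsilon)\le\max\{|u^0|_{r+1},C(R_r(\varepsilon)+1)/\varepsilon\}$. In both versions the constant depends on lower-order seminorms of $u^0$, which interpolation controls by $|\Omega|^{1/2}$ and $|u^0|_m$.
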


\begin{proof}
The proof follows from standard partial differential equation analysis and is provided in Appendix \ref{append:1} for interested researchers.
\end{proof}

The preceding lemma shows that the Sobolev seminorm of the exact Allen--Cahn solution remains uniformly bounded in time. We next establish the corresponding estimate for the nonlinear ordinary differential equation
\begin{equation}
\left\{
\begin{aligned}
&\partial_t u  = u - u^{3}, \\
&u(x,0)  = u^0.
\end{aligned}
\right.
\label{eq:ode}
\end{equation}

\begin{lemma}
\label{ode}
Assume that \(u^0\in H^r(\Omega)\) satisfies the periodic boundary condition, and
\(
    \|u^0\|_{L^\infty}\le 1.
\)
For each integer \(r\ge0\), there exist two constants \(C_r,~\widetilde C_r>0\), depending only on \(\Omega\), \(d\), and \(r\), such that
\begin{equation}
    \left|
    \mathcal S_{\mathcal N}(t)u^0
    \right|_r
    \le
     C_r
    \left(
        |u^0|_r+1
    \right),
    \quad \forall 0<t\le1 
    \label{eq:SN_Lambda_r_bound}
\end{equation}
and
\begin{equation}
    \left|
    \left(\mathcal S_{\mathcal N}(t)u^0\right)^3
    \right|_r
    \le
    \widetilde C_r
    \left(
        |u^0|_r+1
    \right),
    \quad \forall 0<t\le1 .
    \label{eq:SN_cube_Lambda_r_bound}
\end{equation}
\end{lemma}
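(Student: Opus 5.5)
We write the nonlinear flow explicitly as a composition $\mathcal S_{\mathcal N}(t)u^0 = g_t(u^0)$, where
\[
g_t(s) = \frac{e^{t}s}{\sqrt{1+(e^{2t}-1)s^2}},
\]
and bound $|g_t(u^0)|_r$ through a Faà di Bruno expansion together with \Cref{lem:HGN}.

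\textbf{Step 1: the scalar function.} Since $e^{2t}-1\ge0$ for $t\ge 0$, the denominator satisfies $\sqrt{1+(e^{2t}-1)s^2}\ge 1$. Moreover $g_t$ is smooth on $[-1,1]$ and maps $[-1,1]$ into $[-1,1]$, since $\pm1$ are equilibria of the ODE and $g_t$ is increasing. All derivatives $g_t^{(j)}$, $0\le j\le r$, are bounded on $[-1,1]$ uniformly for $0<t\le1$, say by $K_r$: they are rational expressions in $s$, $e^t$ and powers of the denominator, and $t\in(0,1]$ is compact. The same holds for $h_t:=g_t^3$. It follows that $\|\mathcal S_{\mathcal N}(t)u^0\|_{L^\infty}\le1$ and $\|(\mathcal S_{\mathcal N}(t)u^0)^3\|_{L^\infty}\le1$.

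\textbf{Step 2: Faà di Bruno and \Cref{lem:HGN}.} The case $r=0$ is trivial, since $|v|_0=\|v\|_{L^2}\le|\Omega|^{1/2}\|v\|_{L^\infty}$. For $r\ge1$, \Cref{lem:sobolev_equivalence} reduces the claim to bounding $\|D^\alpha g_t(u^0)\|_{L^2}$ for $|\alpha|=r$. The $L^2$ part is controlled by the $L^\infty$ bound, and the $H^r$ norm contains lower-order terms that are handled the same way. By the multivariate chain rule,
\[
D^\alpha g_t(u^0)=\sum_{q=1}^{r}\;\sum_{\beta_1+\cdots+\beta_q=\alpha,\ |\beta_i|\ge1} c_{\beta}\, g_t^{(q)}(u^0)\prod_{i=1}^q D^{\beta_i}u^0 .
\]
Each term is estimated by $K_r\|\prod_i D^{\beta_i}u^0\|_{L^2}$. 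We then apply \eqref{eq:product l 2r/m} with $k=m=r$, so that $p_m=2$, and $v_i=u^0$ with $\|u^0\|_{L^\infty}\le1$. This gives the bound $C\|u^0\|_{H^r}^{r/r}=C\|u^0\|_{H^r}$; note that the exponent is exactly one, so the bound is linear. Finally, \Cref{lem:sobolev_equivalence} gives $\|u^0\|_{H^r}\le c_r^{-1}(\|u^0\|_{L^2}+|u^0|_r)\le C(1+|u^0|_r)$, using $\|u^0\|_{L^2}\le|\Omega|^{1/2}$. This proves \eqref{eq:SN_Lambda_r_bound}.

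The same argument with $h_t$ in place of $g_t$ yields \eqref{eq:SN_cube_Lambda_r_bound}. Alternatively, one can apply \eqref{eq:product estimate hr} to the product $v^3$ with $v=\mathcal S_{\mathcal N}(t)u^0$, using $\|v\|_{L^\infty}\le1$ and the bound on $\|v\|_{H^r}$ just obtained.

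\textbf{Main obstacle.} The crucial point is linearity in $|u^0|_r$. A naive Moser estimate would produce powers of $\|u^0\|_{H^r}$, whereas here the linear bound holds because the Gagliardo--Nirenberg exponents sum to exactly $1$ and all the $L^\infty$ factors are at most $1$. The other thing to check carefully is that the constants $K_r$ stay uniform in $t\in(0,1]$. This is where the positivity of the denominator on $[-1,1]$, i.e.\ $1+(e^{2t}-1)s^2\ge1$, is used.
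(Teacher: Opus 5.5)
Your proposal is correct and follows essentially the same route as the paper: the explicit formula $g_t$, uniform bounds on $g_t^{(k)}$ coming from $1+(e^{2t}-1)z^2\ge1$, Fa\`a di Bruno, and the $L^2$ product estimate of Lemma~\ref{lem:HGN} with total derivative order equal to $k=r$ (giving a bound linear in $\|u^0\|_{H^r}$), then norm equivalence with $\|u^0\|_{L^2}\le|\Omega|^{1/2}$ and the same argument for $h_t=g_t^3$. One cosmetic fix: justify the uniform-in-$t$ derivative bounds by continuity on the compact set $[0,1]\times[-1,1]$ (with $g_0(z)=z$), since $(0,1]$ itself is not compact.
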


\begin{proof}
The solution of the nonlinear subproblem \eqref{eq:ode} is given explicitly by
\[
    \mathcal S_{\mathcal N}(t)u^0
    =
    g_t(u^0),
\]
where
\[
    g_t(z)
    :=
    \frac{e^t z}{\sqrt{1+(e^{2t}-1)z^2}},
    \qquad z\in\mathbb R,\quad t\in[0,1].
\]
Since
\[
    1+(e^{2t}-1)z^2\ge 1,
    \qquad t\in[0,1],
\]
it follows that, for every integer \(r\ge1\),
\begin{equation}
    \max_{1\le k\le r}
    \sup_{t\in[0,1]}
    \sup_{|z|\le1}
    \left|g_t^{(k)}(z)\right|
    \le C_{g,r},
    \label{eq:Ft_derivative_bound_Lambda}
\end{equation}
where \(C_{g,r}>0\) depends only on \(r\).

We first derive an \(H^r\)-estimate for \(g_t(u^0)\). For \(r=0\), the desired estimate \eqref{eq:SN_Lambda_r_bound} follows directly from the explicit formula:
\[
    \left\|\mathcal S_{\mathcal N}(t)u^0\right\|_{L^2}
    =
    \left\|
    \frac{e^t u^0}{\sqrt{1+(e^{2t}-1)(u^0)^2}}
    \right\|_{L^2}
    \le
    e^t\|u^0\|_{L^2}
    \le
    e\|u^0\|_{L^2}.
\] For $r\geq 1$, let \(\beta\) be a multi-index with \(|\beta|=r\). By the multidimensional Fa\`a di Bruno formula,
\begin{equation}
\begin{aligned}
    D^\beta g_t(u^0)
    &=
    g'(u^0)D^\beta u^0  
    +
    \sum_{k=2}^{r}
    g^{(k)}(u^0)
    \sum_{\substack{\beta_1+\cdots+\beta_k=\beta\\
                    |\beta_i|\ge 1}}
    c_{\boldsymbol\beta}
    \prod_{i=1}^{k}D^{\beta_i}u^0 ,
\end{aligned}
\label{eq:Faa_di_Bruno_SN_Lambda}
\end{equation}
where \(c_{\boldsymbol\beta}\) depends only on multi-indices $\beta_1,\ldots,\beta_k$. Using \eqref{eq:Ft_derivative_bound_Lambda} and the assumption \(\|u^0\|_{L^\infty}\le1\), we obtain
\begin{equation}
\begin{aligned}
    \left\|D^\beta g_t(u^0)\right\|_{L^2}
    &\le
    C_{\Omega,d,r}^{(1)}
    \left\|D^\beta u^0\right\|_{L^2} 
    +
    C_{\Omega,d,r}^{(1)}
    \sum_{k=2}^{r}
    \sum_{\substack{\beta_1+\cdots+\beta_k=\beta\\
                    |\beta_i|\ge 1}}
    \left\|
    \prod_{i=1}^{k}D^{\beta_i}u^0
    \right\|_{L^2},
\end{aligned}
\label{eq:r_order_SN_pre_Lambda}
\end{equation}
where \(C_{\Omega,d,r}^{(1)}>0\) depends only on \(\Omega\), \(d\), and \(r\). 
Since $\|u^0\|_{L^\infty}\leq1$, for each decomposition
\(
\beta_1+\cdots+\beta_k=\beta,
\, |\beta_i|\geq1,
\)
the estimate \eqref{eq:product l 2r/m} in Lemma~\ref{lem:HGN} (by taking $m=k$) gives
\begin{equation}
\left\|\prod_{i=1}^kD^{\beta_i}u^0\right\|_{L^2}
\leq C_{\beta}\|u^0\|_{H^r}.
\label{eq:product_SN_bound_Lambda}
\end{equation}
Substituting \eqref{eq:product_SN_bound_Lambda} into \eqref{eq:r_order_SN_pre_Lambda}, we arrive at
\begin{equation}
    \left\|D^\beta g_t(u^0)\right\|_{L^2}
    \le
    C_{\Omega,d,r}^{(2)}
    \left\|u^0\right\|_{H^r},
    \quad \forall |\beta|=r,\quad 0<t\le1,
    \label{eq:D_beta_composition_bound_Lambda}
\end{equation}
where \(C_{\Omega,d,r}^{(2)}>0\) depends only on \(\Omega\), \(d\), and \(r\), but not on \(t\).
By Lemma~\ref{lem:sobolev_equivalence} and the fact that \(\|u^0\|_{L^\infty}\le1\), we then have
\[
    |g_t(u^0)|_r
    \le
    C_{\Omega,d,r}^{(3)}
    \|u^0\|_{H^r}\le C_{\Omega,d,r}^{(4)}
    \left(
        |u^0|_r+\|u^0\|_{L^2} 
    \right)\le C_r \left(
        |u^0|_r+1
    \right),
    \qquad 0<t\le1,
\]
where \(C_{\Omega,d,r}^{(3)},~C_{\Omega,d,r}^{(4)},~C_r>0\) depend only on \(\Omega\), \(d\), and \(r\), but not on \(t\).
This proves \eqref{eq:SN_Lambda_r_bound} for \(r\ge1\).

It remains to bound the cubic term in \eqref{eq:SN_cube_Lambda_r_bound}. Define
\[
    h_t(z):=\left(g_t(z)\right)^3.
\]
Since \(g^{(k)}(z)\) is uniformly bounded for \(0<t\le1\), \(|z|\le1\), and \(0\le k\le r\), the derivatives of \(h_t\) are also uniformly bounded, namely
\[
    \max_{0\le k\le r}
    \sup_{t\in[0,1]}
    \sup_{|z|\le1}
    |h_t^{(k)}(z)|
    \le
    C_{h,r}.
\]
Applying the preceding composition estimate to \(h_t(u^0)\), we obtain a similar result
\[
    |h_t(u^0)|_r
    \le
    \widetilde C_r
    \left(
        |u^0|_r+1
    \right),
    \qquad 0<t\le1 .
\]
This proves \eqref{eq:SN_cube_Lambda_r_bound}, and the proof is complete.
\end{proof}

\section{Long-time Sobolev bounds of numerical solution}
\label{sec:bound}

In \cite{li2022stabilityI}, the authors prove that the Strang splitting is unconditionally energy stable when solving the Allen--Cahn equation, which is useful for global-in-time stability analysis. 
\begin{lemma}[Energy dissipation law \cite{li2022stabilityI}]\label{energy dissipation law}
Assume $u^{0} \in H^{1}(\Omega)$ satisfies the periodic boundary condition, and
\(
    \|u^0\|_{L^\infty}\le 1.
\) For any $\tau > 0$ and any $n \geq 0$, the Strang splitting method \eqref{eq:strang} satisfies the following energy dissipation law:
\begin{equation}
\widetilde{E}^{n+1} \leq \widetilde{E}^{n},
\end{equation}
where
\begin{equation}\label{eq:energy}
\begin{aligned}
\widetilde{E}^{n} 
&= \frac{1}{2\tau} \left\langle \left(e^{-\varepsilon^{2}\tau\Delta} -\mathcal{I}\right) u^{n,(1)}, u^{n,(1)} \right\rangle + \int_{\Omega} \widetilde{F}\left(u^{n,(1)}\right)\,\mathrm dx, \\
\widetilde{F}\left(u^{n,(1)}\right) 
&= \frac{1}{4} + \frac{1}{2\tau} \left(u^{n,(1)}\right)^{2} - \frac{e^{\tau}}{\tau \left(e^{2\tau} -1\right)} \left( \sqrt{  1 + \left(e^{2\tau} - 1\right) \left(u^{n,(1)}\right)^{2}  } - 1 \right),
\end{aligned}
\end{equation}
and $u^{n,(1)}$ is defined as \eqref{eq:step1_1}.
\end{lemma}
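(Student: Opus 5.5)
The plan is to show directly that $\widetilde E^{n+1}\le\widetilde E^n$ by writing both energies in terms of the intermediate variables and exploiting the variational structure of each sub-step. Let $w:=u^{n,(1)}$, $v:=u^{n,(2)}=g_\tau(w)$ and $\tilde w:=u^{n+1,(1)}=e^{\frac{\tau}{2}\varepsilon^2\Delta}u^{n+1}=e^{\tau\varepsilon^2\Delta}v$. Since $\|u^0\|_{L^\infty}\le1$, the heat semigroup and $g_\tau$ both preserve the bound $|\cdot|\le1$ by the maximum principle and the explicit formula. Hence all iterates satisfy $\|w\|_{L^\infty},\|v\|_{L^\infty}\le1$, and every square root appearing in $\widetilde F$ is well defined and smooth. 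Set $A:=e^{-\tau\varepsilon^2\Delta}$, which is self-adjoint with $A\ge\mathcal I$. With this notation
\[
\widetilde E^n=\frac{1}{2\tau}\langle (A-\mathcal I)w,w\rangle+\int_\Omega\widetilde F(w)\dx ,
\qquad \tilde w=A^{-1}v .
\]

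\textbf{Step 1 (nonlinear step as a gradient map).} From the formula for $g_\tau$ we invert to get $w=g_\tau^{-1}(v)=e^{-\tau}v/\sqrt{1-(1-e^{-2\tau})v^2}$. A direct differentiation of $\widetilde F$ should give the identity
\[
\widetilde F'(w)=\frac{1}{\tau}\bigl(w-e^{-\tau}\cdots\bigr)=\frac{1}{\tau}\bigl(w-v\bigr),
\]
so that $v=w-\tau\widetilde F'(w)$. This is the key algebraic check and I would verify it first. In addition, $\widetilde F''(w)=\frac{1}{\tau}\bigl(1-g_\tau'(w)\bigr)$, and $g_\tau'(w)=e^\tau(1+(e^{2\tau}-1)w^2)^{-3/2}\le e^{\tau}$. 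This gives the semiconvexity bound $\widetilde F''\ge \frac{1-e^{\tau}}{\tau}$, and I also want an upper bound $\widetilde F''\le 1/\tau$.

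\textbf{Step 2 (energy comparison).} Write $\widetilde E^{n+1}-\widetilde E^n$. For the quadratic part, use the identity
\[
\langle (A-\mathcal I)\tilde w,\tilde w\rangle-\langle (A-\mathcal I)w,w\rangle = 2\langle (A-\mathcal I)\tilde w,\tilde w-w\rangle-\langle (A-\mathcal I)(\tilde w-w),\tilde w-w\rangle .
\]
For the potential part, Taylor's theorem gives $\widetilde F(\tilde w)-\widetilde F(w)\le \widetilde F'(\tilde w)(\tilde w-w)-\frac12\min\widetilde F''\,(\tilde w-w)^2$, or the analogous expansion around $w$, whichever makes the cross terms cancel. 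Using $A\tilde w=v=w-\tau\widetilde F'(w)$, we have $(A-\mathcal I)\tilde w=w-\tilde w-\tau\widetilde F'(w)$. Expanding around $w$, the linear terms $\langle\widetilde F'(w),\tilde w-w\rangle$ cancel. What remains is a quadratic form in $e:=\tilde w-w$:
\[
\widetilde E^{n+1}-\widetilde E^n\le -\frac{1}{\tau}\|e\|^2-\frac{1}{2\tau}\langle (A-\mathcal I)e,e\rangle+\frac12\sup\widetilde F''\,\|e\|^2 .
\]

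\textbf{Step 3 (sign of the remainder).} The task is to show this quadratic form is nonpositive. With $\sup\widetilde F''\le\frac1\tau$ (because $g_\tau'\ge0$), the first and last terms combine to $-\frac{1}{2\tau}\|e\|^2$. Since $A-\mathcal I\ge0$, the whole expression is $\le0$ for every $\tau>0$, which proves the claim.

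The main obstacle is Step 1, namely the exact algebraic identity $\widetilde F'(w)=(w-g_\tau(w))/\tau$ together with the sign of $\widetilde F''$. Checking these requires careful differentiation of the square-root term, where the constant $e^\tau/(\tau(e^{2\tau}-1))$ is what makes the cancellation work. I would verify this first. If the chosen Taylor centre produces the wrong cross term, I would expand around $\tilde w$ instead and use the lower bound $\widetilde F''\ge(1-e^\tau)/\tau$.
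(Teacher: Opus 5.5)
Your proof is correct and is essentially the argument of \cite{li2022stabilityI}, to which the paper simply defers. Since $u^{n+1,(1)}=e^{\tau\varepsilon^2\Delta}g_\tau(u^{n,(1)})$, the scheme is a Lie step in the variable $u^{n,(1)}$ with $g_\tau(w)=w-\tau\widetilde F'(w)$, and testing with the increment together with $\widetilde F''=(1-g_\tau')/\tau\le 1/\tau$ gives $\widetilde E^{n+1}\le\widetilde E^n$ for every $\tau>0$. Only the Taylor expansion about $u^{n,(1)}$ is needed; your fallback expansion about $u^{n+1,(1)}$ would not close. The garbled intermediate expression in Step 1 should simply read $\widetilde F'(w)=\frac1\tau\bigl(w-g_\tau(w)\bigr)$.
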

\begin{proof}
     The case of periodic boundary condition has been proved in \cite{li2022stabilityI}. 
\end{proof}

With this lemma, we are ready to state and prove the following global-in-time \(H^r\)-seminorm bound of the numerical solutions in the Strang splitting method, with the dependency of the upper bound on $\varepsilon$ and $u^0$ given explicitly.

\begin{theorem}[Long-time \(H^r\)-seminorm bound of \(u^n\)]
\label{thm:uniform bound}
Assume that \(0<\varepsilon\le 1\), \(\tau\le 1\), and the initial data
\(u^0\) satisfies
\(
    \|u^0\|_{L^\infty}\le 1
\)
and for some integer \(m\ge1\),
\(
    |u^0|_m< \infty.
\)
Let 
\begin{equation}
    R_r(\varepsilon)
    :=
    \sup_{n\ge0}|u^n|_r.
\end{equation}
Then the numerical solution \(u^n\) generated by the Strang splitting method \eqref{eq:strang} satisfies the global-in-time estimate
\begin{equation}
\begin{aligned}
    R_1(\varepsilon)
    &\le
    K_{1,u^0}
    \left(
        \varepsilon^{-1}+1
    \right),\\
    R_{r+1}(\varepsilon)
    &\le
    \max\left\{|u^0|_{r+1},
    C_{\Omega,d,r}
    \frac{R_r(\varepsilon)+1}{\varepsilon}\right\},\qquad 1\leq r\leq m-1,
    \end{aligned}
    \label{eq:R1_explicit_recursive}
\end{equation}
where $K_{1,u^0}
    :=
    \max\left\{
        |u^0|_1,\,
        C_{\Omega,d,1}\left(\sqrt{C_E}+1\right)
    \right\}$, $C_{\Omega,d,r}>0$ are constants depending only on $(\Omega,d,r)$, and  $C_E \coloneqq 2 E^0 + 4 C_{\Omega}$ with  initial energy $E^0 = E(u^0)$ and  a constant $C_{\Omega}>0$ depending only on $\Omega$.
    In particular, \eqref{eq:R1_explicit_recursive} implies
    \begin{equation}
    R_r(\varepsilon)
    \le
    K_{r,u^0}
    \left(
        \varepsilon^{-r}+1
    \right),
    \qquad 1\leq r\leq m,
    \label{eq:explicit_Lambda_eps_bound}
\end{equation}
where the constants \(K_{r,u^0}>0\) depend only on \(\Omega\), \(d\), \(r\), the initial data $u^0$, and  the initial energy $E^0$.
\end{theorem}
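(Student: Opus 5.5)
The plan is to get the base case $r=1$ from the modified energy of Lemma~\ref{energy dissipation law}. The step $r\to r+1$ will come from a one-step "dissipation beats nonlinearity" argument, which produces exactly the $\max\{\cdot,\cdot\}$ structure in \eqref{eq:R1_explicit_recursive}. Iterating the recursion then gives \eqref{eq:explicit_Lambda_eps_bound}.

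\emph{Preliminaries.} First I record the discrete maximum principle. $\mathcal S_{\mathcal L}$ is an averaging with a positive kernel, and $g_\tau$ maps $[-1,1]$ into itself. Hence $\|u^n\|_{L^\infty},\|u^{n,(1)}\|_{L^\infty},\|u^{n,(2)}\|_{L^\infty}\le 1$ for all $n$, so Lemma~\ref{ode} applies at every step. Moreover $\mathcal S_{\mathcal L}(s)$ is a contraction in every seminorm $|\cdot|_r$, since it multiplies the $j$-th mode by $e^{-s\varepsilon^2\lambda_j}\le1$.

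\emph{Base case.} Expanding in eigenfunctions, the quadratic form in $\widetilde E^n$ equals $\frac1{2\tau}\sum_j(e^{\varepsilon^2\tau\lambda_j}-1)|\hat v_j|^2$. Using $e^x-1\ge x$, it is bounded below by $\frac{\varepsilon^2}{2}|u^{n,(1)}|_1^2$. For $|z|\le1$ the function $\widetilde F(z)$ is bounded below by $-C$, uniformly in $\tau\le1$, so $\int_\Omega\widetilde F\ge -C_\Omega$. At $n=0$ the quadratic form is $\frac1{2\tau}\sum_j(1-e^{-\varepsilon^2\tau\lambda_j})|\hat u^0_j|^2\le\frac{\varepsilon^2}{2}|u^0|_1^2$. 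Also $\widetilde F$ is bounded above on $[-1,1]$ uniformly in $\tau$, so $\widetilde E^0\le E^0+C_\Omega$. Lemma~\ref{energy dissipation law} then yields $\varepsilon^2|u^{n,(1)}|_1^2\le 2(\widetilde E^0+C_\Omega)\le C_E$. Next I write $u^{n+1}=\mathcal S_{\mathcal L}(\tau/2)\,\mathcal S_{\mathcal N}(\tau)u^{n,(1)}$, and use the contraction property together with \eqref{eq:SN_Lambda_r_bound}. This gives $|u^{n+1}|_1\le C_{\Omega,d,1}(\sqrt{C_E}\,\varepsilon^{-1}+1)$. Combining with $n=0$ gives the bound $K_{1,u^0}(\varepsilon^{-1}+1)$.

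\emph{Induction step.} Assume $R_r<\infty$. I write one step as $u^{n+1}=e^{\tau\varepsilon^2\Delta}u^n+\mathcal S_{\mathcal L}(\tau/2)N^n$ with $N^n:=\mathcal S_{\mathcal N}(\tau)u^{n,(1)}-u^{n,(1)}=\int_0^\tau(w-w^3)(s)\,ds$, where $w(s)=\mathcal S_{\mathcal N}(s)u^{n,(1)}$. Lemma~\ref{ode}, both \eqref{eq:SN_Lambda_r_bound} and \eqref{eq:SN_cube_Lambda_r_bound}, gives $|N^n|_r\le C\tau(R_r+1)$. I then compute $|u^{n+1}|_{r+1}^2$ mode by mode, moving one derivative onto the linear part. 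The linear part contributes $|e^{\tau\varepsilon^2\Delta}u^n|_{r+1}^2\le|u^n|_{r+1}^2-c\,\varepsilon^2\tau\,|u^n|_{r+2,\tau}^2$, where I would use a smoothed quantity such as $\sum_j\lambda_j^{r+1}\frac{1-e^{-2\varepsilon^2\tau\lambda_j}}{\varepsilon^2\tau}|\hat u_j|^2$ in place of $|u^n|_{r+2}^2$. The cross term is bounded by $C\tau|u^n|_{r+2,\tau}(R_r+1)$, and the quadratic term in $N^n$ by $C\tau^2(R_r+1)|\cdot|$, using the smoothing of $\mathcal S_{\mathcal L}(\tau/2)$ if needed. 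The interpolation inequality $|u|_{r+1}^2\le|u|_r\,|u|_{r+2}$, in its smoothed analogue, gives dissipation $\gtrsim\varepsilon^2\tau|u^n|_{r+1}^4/R_r^2$. So whenever $|u^n|_{r+1}\ge C_{\Omega,d,r}(R_r+1)/\varepsilon$, the dissipation dominates and $|u^{n+1}|_{r+1}\le|u^n|_{r+1}$. Otherwise I would show that one step increases the seminorm by at most a bounded factor. Either way $\sup_n|u^n|_{r+1}\le\max\{|u^0|_{r+1},C(R_r+1)/\varepsilon\}$. Inserting $R_r\le K_r(\varepsilon^{-r}+1)$ and using $\varepsilon\le1$ gives $R_{r+1}\le K_{r+1}(\varepsilon^{-r-1}+1)$.

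\emph{Main obstacle.} I expect the hardest part to be making the induction step rigorous uniformly in $\tau$. For tiny $\tau$ the heat dissipation over one step is only $\varepsilon^2\tau\lambda_j$ per mode, while high modes may be insufficiently damped. The smoothed $\tau$-dependent "$|\cdot|_{r+2,\tau}$" quantity must be handled carefully, both in the interpolation and in the non-dissipative branch ("at most a bounded factor"). If this branch fails, I would instead use a discrete Duhamel representation with mode-wise summation $\sum_k\tau\lambda_j^{1/2}e^{-\varepsilon^2\lambda_j k\tau}\lesssim\varepsilon^{-2}\lambda_j^{-1/2}$ as a fallback.
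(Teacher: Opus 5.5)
Your base case is the paper's argument. Your justification of $\widetilde E^0\le E^0+C_\Omega$ through the $n=0$ quadratic form is in fact more careful than the paper's. The forward decomposition $u^{n+1}=e^{\tau\varepsilon^2\Delta}u^n+\mathcal S_{\mathcal L}(\tau/2)N^n$ and the cross-term bound $2|N^n|_r\,|u^n|_{r+2,\tau}$ are also correct.

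\textbf{The gap.} The induction step is not closed, exactly where you placed the ``main obstacle''. The ``smoothed analogue'' of interpolation, $|u|_{r+1}^2\lesssim|u|_r\,|u|_{r+2,\tau}$, is false. For a mode with $\varepsilon^2\tau\lambda_j\gg1$ your multiplier is only $\approx\lambda_j^{r+1}/(\varepsilon^2\tau)$. So for $u$ concentrated on that mode, $|u|_r|u|_{r+2,\tau}\approx(\varepsilon^2\tau\lambda_j)^{-1/2}|u|_{r+1}^2\ll|u|_{r+1}^2$. Hence the quartic damping $\varepsilon^2\tau|u^n|_{r+1}^4/R_r^2$ does not follow, and your barrier argument rests on the unproved ``at most a bounded factor'' branch.

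The difficulty is not that high modes are under-damped; one step removes a fixed fraction of them. The problem is that this damping does not have the $\lambda_j^{r+2}$ form that interpolation requires. Also, the smoothing in the quadratic term is not optional. You need $|\mathcal S_{\mathcal L}(\tau/2)N^n|_{r+1}^2\le(e\varepsilon^2\tau)^{-1}|N^n|_r^2\le C\tau(R_r+1)^2/\varepsilon^2$. Going through $|N^n|_{r+1}$ instead produces a term $C\tau^2|u^n|_{r+1}^2$ that the barrier does not absorb directly.

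\textbf{The paper's route.} The paper writes each heat half-step backward, $u^n=e^{-\frac\tau2\varepsilon^2\Delta}u^{n,(1)}$ and $u^{n,(2)}=e^{-\frac\tau2\varepsilon^2\Delta}u^{n+1}$, and tests with the output. Using $e^x-1\ge x$, each half-step then yields dissipation $-\tau\varepsilon^2|\cdot|_{r+2}^2$ of its output, in the true unsmoothed seminorm, as for implicit Euler.

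In the nonlinear substep one moves one derivative onto $u^{n,(2)}$ and uses Lemma~\ref{ode} at order $r+2$ to get $|u^{n,(2)}|_{r+2}\le C(|u^{n,(1)}|_{r+2}+1)$. By Young this is absorbed into the first half-step's dissipation, leaving $|u^{n+1}|_{r+1}^2-|u^n|_{r+1}^2\le-\varepsilon^2\tau|u^{n+1}|_{r+2}^2+C\tau(R_r+1)^2/\varepsilon^2$.

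Genuine interpolation for $u^{n+1}$ then gives an inequality that is implicit in $x_{n+1}=|u^{n+1}|_{r+1}^2$. The barrier is immediate: if $x_{n+1}$ exceeded the threshold, the right-hand side would be negative, forcing $x_{n+1}<x_n$. This holds uniformly in $\tau$ with no case distinction.

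\textbf{Repairing your forward route.} Split modes by the size of $\varepsilon^2\tau\lambda_j$.
\begin{itemize}
\item Where $\varepsilon^2\tau\lambda_j\le1$, your multiplier is $\ge c\lambda_j^{r+2}$, so genuine interpolation gives quartic damping of the low part.
\item Where $\varepsilon^2\tau\lambda_j>1$, one step removes at least $c|u^n_{\mathrm{high}}|_{r+1}^2$. Because $\tau\le1$, this beats $C\tau(R_r+1)^2/\varepsilon^2$ once $|u^n|_{r+1}^2\gtrsim(R_r+1)^2/\varepsilon^2$.
\end{itemize}
Below the threshold the increase is additive, at most $C\tau(R_r+1)^2/\varepsilon^2$. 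So $\sup_n|u^n|_{r+1}^2\le\max\{|u^0|_{r+1}^2,\,C(R_r+1)^2/\varepsilon^2\}$ still follows.
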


\begin{proof}
We prove the theorem by mathematical induction on the derivative
order \(r\). Throughout the proof, \(C_{\Omega,d,r}>0\) denotes a generic
constant depending only on \(\Omega\), \(d\), and \(r\), but independent
of \(n\), \(T\), \(\tau\), and \(\varepsilon\). The value of
\(C_{\Omega,d,r}\) may change from line to line.
The discrete maximum principle is straightforward: from
\(\|u^0\|_{L^\infty}\le 1\), we have
\begin{equation}
    \|u^n\|_{L^\infty}\le 1,
    \qquad
    \|u^{n,(1)}\|_{L^\infty}\le 1,
    \qquad
    \|u^{n,(2)}\|_{L^\infty}\le 1,
    \qquad n\ge0.
    \label{eq:maximum_bound_Lambda_theorem}
\end{equation}

We now prove the first-order estimate, i.e., the case of $r=1$. According to
Lemma~\ref{energy dissipation law}, the following inequality holds:
\[
\frac{1}{2\tau}
\left\langle
\left(e^{-\varepsilon^2\tau\Delta}-\mathcal I\right)
 u^{n,(1)},u^{n,(1)}
\right\rangle
+
\int_{\Omega}\widetilde F(u^{n,(1)})\,\mathrm dx
\le
\widetilde E^0 .
\]
Since \(\|u^{n,(1)}\|_{L^\infty}\le 1\), there exists a constant
\(C_\Omega>0\), independent of \(n\), \(T\), and \(\tau\), such
that
\[
    \sup_{n\geq 0} \left|
    \int_{\Omega}\widetilde F(u^{n,(1)})\,\mathrm dx
    \right|
    \le C_\Omega.
\]
As a consequence, we have
\begin{equation}
    \frac{1}{\tau}
    \left\langle
    \left(e^{-\varepsilon^2\tau\Delta}-\mathcal I\right)
    u^{n,(1)},u^{n,(1)}
    \right\rangle
    \le 2(\widetilde E^0+C_\Omega)\leq 2(E^0+2 C_\Omega) =:C_E .
    \label{eq:energy_first_order_Lambda}
\end{equation}
Using the spectral expansion of \(-\Delta\), we write
\[
    u^{n,(1)}
    =
    \sum_j \widehat{u_j^{n,(1)}}\phi_j,
    \qquad
    -\Delta\phi_j=\lambda_j\phi_j,
    \qquad
    \lambda_j\ge0.
\]
It follows from \eqref{eq:energy_first_order_Lambda} that
\[
    \frac{1}{\tau}
    \sum_j
    \left(e^{\varepsilon^2\tau\lambda_j}-1\right)
    |\widehat{u_j^{n,(1)}}|^2
    \le C_E ,
\]
which yields
\begin{equation}
\begin{aligned}
    |u^{n,(1)}|_1^2
    &=
    \sum_j
    \lambda_j
    |\widehat{u_j^{n,(1)}}|^2      \le
    \sum_{\lambda_j>0}
    \frac{
        e^{\varepsilon^2\tau\lambda_j}-1
    }{
        \varepsilon^2\tau\lambda_j
    }
    \lambda_j
    |\widehat{u_j^{n,(1)}}|^2       \le
    \frac{C_E}{\varepsilon^2}.
\end{aligned}
\label{eq:first_order_Lambda_intermediate}
\end{equation}
Here, we use the fact $\frac{e^x-1}{x}\ge1,~\forall x>0$.
By the contraction property of $\mathcal S_\mathcal{L}(\tau/2)$, Lemma~\ref{ode}, and \eqref{eq:first_order_Lambda_intermediate},
we obtain that for any $n\geq 0$,
\[
\begin{aligned}
    |u^{n+1}|_1
    &=
    \left|
    \SLt \mathcal S_{\mathcal N}(\tau)u^{n,(1)}
    \right|_1                                      \le
    \left|
    \mathcal S_{\mathcal N}(\tau)u^{n,(1)}
    \right|_1                                   \le
     C_1
    \left(
        |u^{n,(1)}|_1+1
    \right) \\    &\le
   C_{\Omega,d,1}\left(\sqrt{C_E}+1\right)
    \left(
        \varepsilon^{-1}+1
    \right).
\end{aligned}
\]
Therefore,
\begin{equation}
    R_1(\varepsilon)
    =
    \sup_{n\ge0}|u^n|_1
    \le
    K_{1,u^0}
    \left(
        \varepsilon^{-1}+1
    \right).
    \label{eq:R1_Lambda_bound}
\end{equation}
This proves the base case of the induction.

Next, we consider the case of $r+1$ with $r\ge 1$.
For the first linear half-step $\SLt$, we have
\begin{equation}\label{eq:firsthalf}
    {u^{n,(1)}-u^n}
    =
    \left(
    \mathcal I-e^{-\frac{\tau}{2}\varepsilon^2\Delta}
    \right)u^{n,(1)} .
\end{equation}
Applying \(\Lambda^{r+1}\) to both sides of \eqref{eq:firsthalf} and
taking the \(L^2\)-inner product with
\(2\Lambda^{r+1}u^{n,(1)}\), 
we obtain
\begin{equation}
\begin{aligned}
    |u^{n,(1)}|_{r+1}^2
    -
    |u^n|_{r+1}^2
    +
    |u^{n,(1)}-u^n|_{r+1}^2
    \le
    -\tau\varepsilon^2 |u^{n,(1)}|_{r+2}^2,
\end{aligned}
\label{eq:first_linear_Lambda_r}
\end{equation}
where we use the inequality $1-e^x\le -x,~\forall x\ge 0$.
For the nonlinear step $\mathcal{S}_{\mathcal{N}}(\tau)$, let
\[
    w(s):=\mathcal S_{\mathcal N}(s)u^{n,(1)},
    \qquad 0<s\le\tau.
\]
Then
\begin{equation}\label{eq:nonlinearstep}
    u^{n,(2)}-u^{n,(1)}
    =
    \int_0^\tau
    \left(
        w(s)-w^3(s)
    \right)\,\mathrm ds .
\end{equation}
Applying \(\Lambda^{r+1}\) to both sides of \eqref{eq:nonlinearstep} and taking the \(L^2\)-inner product with \(2\Lambda^{r+1}u^{n,(2)}\), we have
\begin{equation}
\begin{aligned}
& |u^{n,(2)}|_{r+1}^2
  -
  |u^{n,(1)}|_{r+1}^2
  +
  |u^{n,(2)}-u^{n,(1)}|_{r+1}^2                                      \\
={}&
  2\left\langle\Lambda^{r+1}
  \int_0^\tau
  \left(
      w(s)-w^3(s)
  \right)\,\mathrm ds,
  \Lambda^{r+1}u^{n,(2)}
  \right\rangle                                      \\
={}&
  2\left\langle
  \Lambda^r
  \int_0^\tau
  \left(
      w(s)-w^3(s)
  \right)\,\mathrm ds,
  \Lambda^{r+2}u^{n,(2)}
  \right\rangle                                      \\
\le{}&
  2|u^{n,(2)}|_{r+2}
  \int_0^\tau
  \left(
      |w(s)|_r+|w^3(s)|_r
  \right)\,\mathrm{d}s.
\end{aligned}
\label{eq:nonlinear_Lambda_r_step1}
\end{equation}
Note that $\Lambda$ is self-adjoint and $\Lambda^{r+2}u^{n,(2)}$ is well-defined due to the smoothing effect of $\SLt$.
According to Lemma~\ref{ode},
\[
    |w(s)|_r+|w^3(s)|_r
    \le
    C_{\Omega,d,r}
    \left(
        |u^{n,(1)}|_r+1
    \right).
\]
Using the contraction property of the linear half-step $\SLt$, we further have
\[
    |u^{n,(1)}|_r
    =
    \left|
    e^{\frac{\tau}{2}\varepsilon^2\Delta}u^n
    \right|_r
    \le
    |u^n|_r
    \le
    R_r(\varepsilon)
\]
and
\begin{equation}
    |w(s)|_r+|w^3(s)|_r
    \le
    C_{\Omega,d,r}^{(1)}(R_r(\varepsilon)+1).
    \label{eq:C3_Lambda_r}
\end{equation}
Again by Lemma~\ref{ode},
\[
    |u^{n,(2)}|_{r+2}
    =
    \left|
    \mathcal S_{\mathcal N}(\tau)u^{n,(1)}
    \right|_{r+2}
    \le
    C_{\Omega,d,r}^{(2)}
    \left(
        |u^{n,(1)}|_{r+2}+1
    \right).
\]
Substituting this estimate and \eqref{eq:C3_Lambda_r} into
\eqref{eq:nonlinear_Lambda_r_step1}, we get
\[
\begin{aligned}
& |u^{n,(2)}|_{r+1}^2
  -
  |u^{n,(1)}|_{r+1}^2
  +
  |u^{n,(2)}-u^{n,(1)}|_{r+1}^2      \\
  \le{}&
  C_{\Omega,d,r}^{(3)}\tau(R_r(\varepsilon)+1)
  \left(
      |u^{n,(1)}|_{r+2}+1
  \right).
\end{aligned}
\]
Using Young's inequality with the small parameter \(\varepsilon^2\), and
using \(0<\varepsilon\le1\), we obtain
\begin{equation}
\begin{aligned}
& |u^{n,(2)}|_{r+1}^2
  -
  |u^{n,(1)}|_{r+1}^2
  +
  |u^{n,(2)}-u^{n,(1)}|_{r+1}^2 \\
  \le{}&
  \varepsilon^2\tau |u^{n,(1)}|_{r+2}^2
  + C_{\Omega,d,r}^{(4)}
  \frac{(R_r(\varepsilon)+1)^2}{\varepsilon^2}\tau .
\end{aligned}
\label{eq:nonlinear_Lambda_r_step2}
\end{equation}
For the last linear half-step $\SLt =\exp\left(\frac{\tau}{2}\varepsilon^2\Delta\right)$, we keep the dissipative term. Since
\[
    {u^{n+1}-u^{n,(2)}}
    =
    \left(
    \mathcal I-e^{-\frac{\tau}{2}\varepsilon^2\Delta}
    \right)u^{n+1},
\]
applying \(\Lambda^{r+1}\) to both sides of the above  and
taking the \(L^2\)-inner product with \(2\Lambda^{r+1}u^{n+1}\) gives
\begin{equation}
\begin{aligned}
    |u^{n+1}|_{r+1}^2
    -
    |u^{n,(2)}|_{r+1}^2
    +
    |u^{n+1}-u^{n,(2)}|_{r+1}^2
    \le
    -\tau\varepsilon^2 |u^{n+1}|_{r+2}^2 .
\end{aligned}
\label{eq:last_linear_Lambda_r}
\end{equation}
Combining \eqref{eq:first_linear_Lambda_r},
\eqref{eq:nonlinear_Lambda_r_step2}, and
\eqref{eq:last_linear_Lambda_r}, we have
\begin{equation}
    |u^{n+1}|_{r+1}^2
    -
    |u^n|_{r+1}^2
    \le
    -\varepsilon^2\tau |u^{n+1}|_{r+2}^2
    +
    C_{\Omega,d,r}^{(4)}
    \frac{(R_r+1)^2}{\varepsilon^2}\tau .
    \label{eq:combined_Lambda_r_simplified}
\end{equation}

We now convert the higher-order dissipation in the above inequality into a damping term for
\(|u^{n+1}|_{r+1}\). By the Sobolev interpolation inequality,
\[
    |u^{n+1}|_{r+1}^2
    \le
    |u^{n+1}|_r
    |u^{n+1}|_{r+2}.
\]
Since \(|u^{n+1}|_r\le R_r(\varepsilon)\), we have
\[
    |u^{n+1}|_{r+2}^2
    \ge
    \frac{
        |u^{n+1}|_{r+1}^4
    }{
        (R_r(\varepsilon)+1)^2
    }.
\]
Then \eqref{eq:combined_Lambda_r_simplified} yields
\begin{equation}
    |u^{n+1}|_{r+1}^2-|u^n|_{r+1}^2
    \le
    -
    \frac{\varepsilon^2}{(R_r(\varepsilon)+1)^2}
    \tau |u^{n+1}|_{r+1}^4
    +
    C_{\Omega,d,r}^{(4)}
    \frac{(R_r(\varepsilon)+1)^2}{\varepsilon^2}\tau .
    \label{eq:nonlinear_damping_recursion_Lambda_r}
\end{equation}
We can then claim that
\begin{equation}
    |u^n|_{r+1}^2
    \le
    \max\left\{
        |u^0|_{r+1}^2,\,
        (C_{\Omega,d,r}^{(4)})^{\frac12}
        \frac{(R_r(\varepsilon)+1)^2}{\varepsilon^2}
    \right\},
    \qquad n\ge0,
    \label{eq:barrier_Lambda_r}
\end{equation}
which gives
\begin{equation}
    R_{r+1}(\varepsilon)
    \le
    \max\left\{|u^0|_{r+1},
    C_{\Omega,d,r}
    \frac{R_r(\varepsilon)+1}{\varepsilon}\right\}.
    \label{eq:Rr_plus_one_recursion_Lambda}
\end{equation}
The proof of \eqref{eq:R1_explicit_recursive} is complete.

Assume that, for some integer \(r\ge1\), the following hypothesis holds:
\begin{equation}
    R_r(\varepsilon)
    \le
    K_{r,u^0}
    \left(
        \varepsilon^{-r}+1
    \right).
    \label{eq:induction_hypothesis_Lambda_r}
\end{equation}
Note that the case of $r=1$ has been proved.
Then, from \eqref{eq:R1_explicit_recursive}, we get
\[
\begin{aligned}
    R_{r+1}
    &\le
    |u^0|_{r+1}
    +
    C_{\Omega,d,r}
    \frac{
        K_{r,u^0}(\varepsilon^{-r}+1)+1
    }{\varepsilon} \le
    K_{r+1,u^0}
    \left(
        \varepsilon^{-(r+1)}+1
    \right),
\end{aligned}
\]
where \(K_{r+1,u^0}>0\) depends only on \(\Omega\), \(d\), \(r\), and the initial energy $E^0$, independent of \(n\), \(T\), \(\tau\), and
\(\varepsilon\). 
The proof of \eqref{eq:explicit_Lambda_eps_bound} is complete.
\end{proof}

\section{Error estimate with optimal dependence on $\varepsilon$}\label{sec:error}
Blanes et al.  establish in \cite{blanes2025splitting} an error estimate for operator splitting methods, which covers the second-order Strang splitting scheme. However, the constant in this estimate is expressed only in terms of stability and regularity bounds, and its dependence on the PDE parameters is not explicit. We now provide the following error analysis with error bounds explicitly dependent on  $\varepsilon$.

\begin{lemma}[Local error in $H^r$ with explicit dependence on $\varepsilon$]
\label{lem:local-error-eps2}
Let $r\ge0$ be an integer, $0<\varepsilon\le1$, and $0<\tau\le1$.
Consider the Allen--Cahn equation~\eqref{eq:AC} with periodic boundary conditions. Assume that $u^0\in H^{r+4}(\Omega)$ is periodic and
$\|u^0\|_{L^\infty}\le1$.
For $t_n=n\tau$, define the local error of the Strang splitting method by
\begin{equation}
\label{eq:eps2-local-definition}
\delta^{n+1}
:=\mathcal S_{\mathcal L}\left(\frac{\tau}{2}\right)
\mathcal S_{\mathcal N}(\tau)
\mathcal S_{\mathcal L}\left(\frac{\tau}{2}\right)u(t_n)
-u(t_{n+1}).
\end{equation}
Then, for a given time $T>0$, there exists a constant $C_{r,T,u^0}>0$, depending only on $\Omega$, $d$, $r$, $T$, $\|u^0\|_{H^{r+4}}$, and independent of $n$, $\tau$, $\varepsilon$, such that
\begin{equation}
\label{eq:eps2-local-error}
\|\delta^{n+1}\|_{r,\Lambda}
\le C_{r,T,u^0}\varepsilon^2\tau^3,
\qquad n\tau\le T.
\end{equation}
In particular, for $r=0$,
\[
\|\delta^{n+1}\|_{L^2}
\le C_{0,T,u^0}\varepsilon^2\tau^3,
\qquad n\tau\le T.
\]
\end{lemma}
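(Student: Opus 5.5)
Writing $A=\varepsilon^2\Delta$ and $B(u)=u-u^3$, so that the Allen--Cahn flow is $\partial_t u = Au+B(u)$, we follow the standard Lie-commutator approach for Strang splitting of semilinear parabolic problems \cite{lubich2008splitting,jahnke2000error}, tracking every power of $\varepsilon$. The key observation is that every error term contains at least one factor $A$, inside a commutator with $B$, and therefore carries a factor $\varepsilon^2$.

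\textbf{Step 1: representations of both flows.} Set $v:=u(t_n)$. For the exact solution, the variation-of-constants formula gives
\[
u(t_{n+1})=e^{\tau A}v+\int_0^\tau e^{(\tau-s)A}B(u(t_n+s))\,\mathrm ds .
\]
Expanding the integrand once more gives the Lubich-type expansion, whose second-order term is $\int_0^\tau\int_0^s e^{(\tau-s)A}B'(\cdot)e^{(s-\sigma)A}B(e^{\sigma A}v)\,\mathrm d\sigma\,\mathrm ds$ plus a remainder. For the splitting, we write $\mathcal S_{\mathcal N}(\tau)w=w+\tau B(w)+\tfrac{\tau^2}{2}B'(w)B(w)+R_3$ and substitute $w=e^{\frac\tau2 A}v$. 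Subtracting the two expansions, the leading terms reduce to quadrature errors:
\begin{itemize}
\item the midpoint-rule error for $s\mapsto e^{(\tau-s)A}B(e^{sA}v)$;
\item a corresponding two-dimensional rule error for the double integral.
\end{itemize}
By the Peano kernel representation, these errors are bounded by $\tau^3$ times second derivatives in $s$ of $e^{(\tau-s)A}B(e^{sA}v)$. Each such derivative equals $e^{(\tau-s)A}$ applied to iterated commutators $[A,B](w)$ and $[A,[A,B]](w)$. Similarly, the first derivative of the double-integral integrand involves $[A,B]$.

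\textbf{Step 2: commutator bounds with $\varepsilon$ scaling.} We compute $[A,B](w)=A B(w)-B'(w)Aw=\varepsilon^2\bigl(\Delta(w-w^3)-(1-3w^2)\Delta w\bigr)=-6\varepsilon^2 w|\nabla w|^2$. This is of first order in derivatives and explicitly of size $\varepsilon^2$. By contrast, the double commutator $[A,[A,B]]$ is $\mathcal O(\varepsilon^4)$ times a third-order differential expression in $w$; since $\varepsilon\le1$, this is also $\mathcal O(\varepsilon^2)$. The $H^r$-norms of these products are bounded by Lemma~\ref{lem:HGN} (product estimate \eqref{eq:product estimate hr}), using $\|w\|_{L^\infty}\le1$, in terms of $\|w\|_{H^{r+4}}$. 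Here $w$ ranges over $e^{sA}v$ and $\mathcal S_{\mathcal N}$-images of such functions, which are controlled via the contraction property of $e^{sA}$ and Lemma~\ref{ode}. For the remaining nonlinear remainders, in which $B'$, $B''$ act on these quantities, we again use Lemma~\ref{lem:HGN} together with the maximum principle.

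\textbf{Step 3: closing with $\varepsilon$-free constants, the main obstacle.} It remains to bound $\|u(t)\|_{H^{r+4}}$ for $t\le T$ uniformly in $\varepsilon$. Lemma~\ref{lem:longtime-bound} gives $\varepsilon^{-m}$ growth, which would ruin the estimate. Instead, we use a finite-time Gr\"onwall energy estimate in $H^{r+4}$. Testing with $\Lambda^{2(r+4)}u$, the diffusion term has a favorable sign and can be dropped, and Lemma~\ref{lem:HGN} gives $\|f(u)\|_{H^{r+4}}\le C\|u\|_{H^{r+4}}$. Hence $\|u(t)\|_{H^{r+4}}\le e^{Ct}\|u^0\|_{H^{r+4}}$, with constants depending on $T$ but not on $\varepsilon$. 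This is why the constant in \eqref{eq:eps2-local-error} depends on $T$ and $\|u^0\|_{H^{r+4}}$.

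The delicate point is ensuring that every $\tau^3$ remainder contains a commutator, and hence $\varepsilon^2$, rather than a bare $\tau^3B''(B,B)$ term. This holds because Strang splitting is exact when $A=0$: the pure-$B$ Taylor terms cancel identically between the exact and split flows. Consequently each surviving term has at least one factor of $A$, which we arrange to appear inside a commutator. Finally, the bound in $\|\cdot\|_{r,\Lambda}$ follows from Lemma~\ref{lem:sobolev_equivalence}.
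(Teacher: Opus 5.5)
Your route (Duhamel expansion of the exact flow, Taylor expansion of $\mathcal S_{\mathcal N}$, and quadrature errors, as in Lubich's analysis) differs from the paper's, and you handle its leading terms correctly. The first-order mismatch is exactly the midpoint-rule error for $s\mapsto e^{(\tau-s)A}B(e^{sA}v)$. Its second derivative is $e^{(\tau-s)A}[A,[A,B]](e^{sA}v)=\mathcal O(\varepsilon^4)$ in $H^r$. The second-order mismatch is the one-point rule at $(\tau/2,\tau/2)$ on the triangle. Its error is $\tau^3$ times first derivatives of the integrand, and both are $\varepsilon^2$ times first-order expressions. Two small inaccuracies do not matter: the $s$-derivative produces $\varepsilon^2(6|\nabla x_s|^2y+12x_s\nabla x_s\cdot\nabla y)$ with $y=e^{(s-\sigma)A}B(x_\sigma)$, not literally $[A,B]$; and $[A,[A,B]]$ is second-order, not third. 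Your $\varepsilon$-free $H^{r+4}$ Gr\"onwall bound is exactly what the paper uses.

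The gap is the step you yourself call delicate. The remainder $R^{\mathrm{ex}}$ of your two-term Duhamel expansion and the Taylor remainder $R_3$ of $\mathcal S_{\mathcal N}(\tau)$ are each only $\mathcal O(\tau^3)$, with no factor $\varepsilon$, since they are built from $B''(B,B)$ and $B'B'B$. You need $\|e^{\frac\tau2A}R_3-R^{\mathrm{ex}}\|_{H^r}\le C\varepsilon^2\tau^3$. That Strang splitting is exact for $A=0$ does not give this by itself: it only says the two remainders coincide at the single value $A=0$. For $A\ne0$ they are different expressions evaluated at different states ($e^{sA}v$ and $u(t_n+s)$ versus $\mathcal S_{\mathcal N}(s)e^{\frac\tau2A}v$). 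They do not cancel identically, and their difference has no commutator structure to arrange. Expanding to higher order does not help either, because any $\varepsilon$-free remainder $\mathcal O(\tau^k)$ is not $\mathcal O(\varepsilon^2\tau^3)$ uniformly in $\tau$. As written, the argument proves only the classical $\mathcal O(\tau^3)$ local error.

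What is missing is a quantitative comparison with the case $A=0$. Write both remainders in explicit integral form. For the exact flow, $R^{\mathrm{ex}}=\int_0^\tau e^{(\tau-s)A}\bigl[B'(x_s)P_s+Q_s\bigr]\,\mathrm ds$, where $x_s=e^{sA}v$, $P_s=\int_0^s e^{(s-\sigma)A}\bigl[B(u(t_n+\sigma))-B(x_\sigma)\bigr]\,\mathrm d\sigma$ and $Q_s=B(u(t_n+s))-B(x_s)-B'(x_s)\bigl(u(t_n+s)-x_s\bigr)$. For the splitting, $R_3=\int_0^\tau\frac{(\tau-s)^2}{2}\bigl[B''(\phi_s)(B(\phi_s),B(\phi_s))+B'(\phi_s)B'(\phi_s)B(\phi_s)\bigr]\,\mathrm ds$ with $\phi_s=\mathcal S_{\mathcal N}(s)e^{\frac\tau2A}v$.

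Setting $A=0$ in either remainder gives $\mathcal S_{\mathcal N}(\tau)v-v-\tau B(v)-\frac{\tau^2}{2}B'(v)B(v)$. The replacements are controlled by three estimates:
\begin{itemize}
\item $\|(e^{sA}-I)y\|_{H^r}\le s\varepsilon^2\|y\|_{H^{r+2}}$;
\item $\|u(t_n+s)-\mathcal S_{\mathcal N}(s)v\|_{H^r}\le Cs\varepsilon^2$, by Gr\"onwall;
\item $\|\phi_s-\mathcal S_{\mathcal N}(s)v\|_{H^r}\le C\tau\varepsilon^2$.
\end{itemize}
Each replacement costs a bare factor $\varepsilon^2\tau$ relative to the $\mathcal O(\tau^3)$ size, so the difference is $\mathcal O(\varepsilon^2\tau^4)$. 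This needs only $H^{r+2}$ bounds on the intermediate states, which your $H^{r+4}$ estimate supplies. With this step added, your proof closes.

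The paper sidesteps the issue entirely. It differentiates $z(t)=\mathcal S_{\mathcal L}(t/2)\mathcal S_{\mathcal N}(t)\mathcal S_{\mathcal L}(t/2)u(t_n)$. It then writes the defect $\mathcal R(t)=z'(t)-\mathcal H(z(t))$ exactly as integrals of $[\mathcal L,[\mathcal L,\mathcal N]]$ and $g'_{t-s}(w(s))[\mathcal N,[\mathcal N,\mathcal L]]$, after the $[\mathcal L,\mathcal N](w(t))$ terms cancel; see \eqref{eq:eps2-defect-representation}. The vanishing at $\mathcal L=0$ is thus built into the representation, and no pure-$\mathcal N$ remainder ever appears. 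Variation of constants and Gr\"onwall then finish. Your route avoids the nonlinear commutator $[\mathcal N,[\mathcal N,\mathcal L]]$ and the linearized flow $g'_{t-s}$, but pays for this with the remainder comparison above.
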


\begin{proof}
Set
\[
\mathcal L(w):=\varepsilon^2\Delta w,
\qquad
\mathcal N(w):=w-w^3,
\qquad
\mathcal H:=\mathcal L+\mathcal N.
\]
Throughout the following argument, $C_{r,T,u^0}>0$ denotes a generic constant depending only on $\Omega,\,d,\,r,\,T$, $\|u^0\|_{H^{r+4}}$, while \(C_{\Omega,d,m}>0\) denotes a generic constant depending only on $\Omega,\,d,\,m$.

We first derive regularity estimates independent of
$\varepsilon$. Let $m=r+4$. By Lemma~\ref{lem:HGN}, for every
$w\in H^m(\Omega)$ satisfying $\|w\|_{L^\infty}\le1$, we have
\[
\|w^3\|_{H^m}\le C_{\Omega,d,m}^{(1)}\|w\|_{H^m}.
\]
Applying $D^\alpha$ to the Allen--Cahn equation, taking the
$L^2$-inner product with $D^\alpha u$, and summing over
$|\alpha|\le m$, the periodic boundary condition gives
\begin{align*}
\frac12\frac{\mathrm d}{\mathrm dt}\|u(t)\|_{H^m}^2
&=-\varepsilon^2\sum_{|\alpha|\le m}
\|\nabla D^\alpha u(t)\|_{L^2}^2
+\sum_{|\alpha|\le m}
\left\langle D^\alpha(u-u^3),D^\alpha u\right\rangle \\
&\le
\bigl(\|u(t)\|_{H^m}+\|u(t)^3\|_{H^m}\bigr)
\|u(t)\|_{H^m}
\le C_{\Omega,d,m}^{(2)}\|u(t)\|_{H^m}^2.
\end{align*}
Here we have used the maximum principle
$\|u(t)\|_{L^\infty}\le1$.
Consequently, Gronwall's inequality yields
\begin{equation}
\label{eq:eps2-exact-regularity}
\|u(t)\|_{H^m}
\le e^{C_{\Omega,d,m}^{(3)}t}\|u^0\|_{H^m},
\qquad 0\le t\le T+1.
\end{equation}
The same calculation for the nonlinear subproblem, together with the
contractivity of the linear flow, gives
\begin{equation}
\label{eq:eps2-subflow-regularity}
\|\mathcal S_{\mathcal N}(s)w\|_{H^m}
\le e^{C_{\Omega,d,m}^{(4)}s}\|w\|_{H^m},
\qquad
\|\mathcal S_{\mathcal L}(s)w\|_{H^m}\le\|w\|_{H^m},
\qquad s\ge0,
\end{equation}
provided that $\|w\|_{L^\infty}\le1$.
Both subflows preserve this $L^\infty$ bound. Thus, all intermediate
states $w$ used below satisfy
\begin{equation}
\label{eq:eps2-intermediate-regularity}
\|w\|_{L^\infty}\le1,
\qquad
\|w\|_{H^{r+4}}\le C_{r,T,u^0}^{(1)}.
\end{equation}
We use the commutator convention
\[
[\mathcal F,\mathcal G](w)
:=\mathcal F'(w)\mathcal G(w)
-\mathcal G'(w)\mathcal F(w).
\]
Since
\[
\mathcal L'(w)\eta=\varepsilon^2\Delta\eta,
\qquad
\mathcal N'(w)\eta=(1-3w^2)\eta,
\]
a direct calculation gives
\begin{align}
[\mathcal L,\mathcal N](w)
&=-6\varepsilon^2w|\nabla w|^2,
\label{eq:eps2-first-commutator}\\
[\mathcal L,[\mathcal L,\mathcal N]](w)
&=-24\varepsilon^4D^2w(\nabla w,\nabla w)
-12\varepsilon^4w|D^2w|_F^2,
\label{eq:eps2-linear-commutator}\\
[\mathcal N,[\mathcal N,\mathcal L]](w)
&=12\varepsilon^2w(2w^2-1)|\nabla w|^2,
\label{eq:eps2-nonlinear-commutator}
\end{align}
where
\[
D^2w(\nabla w,\nabla w)
=\sum_{i,j=1}^d\partial_{ij}w\,\partial_iw\,\partial_jw,
\qquad
|D^2w|_F^2=\sum_{i,j=1}^d|\partial_{ij}w|^2.
\]
The products in \eqref{eq:eps2-linear-commutator} have total
derivative order four, whereas those in
\eqref{eq:eps2-nonlinear-commutator} have total derivative order two.
Applying Lemma~\ref{lem:HGN} and \eqref{eq:eps2-intermediate-regularity}, we obtain
\begin{align}
\|[\mathcal L,[\mathcal L,\mathcal N]](w)\|_{H^r}
&\le C_{\Omega,d,r}^{(5)}\varepsilon^4\|w\|_{H^{r+4}}
\le C_{r,T,u^0}^{(2)}\varepsilon^4,
\label{eq:eps2-linear-commutator-bound}\\
\|[\mathcal N,[\mathcal N,\mathcal L]](w)\|_{H^r}
&\le C_{\Omega,d,r}^{(6)}\varepsilon^2\|w\|_{H^{r+2}}
\le C_{r,T,u^0}^{(3)}\varepsilon^2.
\label{eq:eps2-nonlinear-commutator-bound}
\end{align}
We also record two estimates used below. Since $d\le3$, the Sobolev
embedding $H^{r+2}(\Omega)\hookrightarrow W^{r,\infty}(\Omega)$,
the Leibniz rule, and \eqref{eq:eps2-intermediate-regularity} imply
\begin{equation}
\label{eq:eps2-nonlinear-lipschitz}
\begin{aligned}
\|\mathcal N(p)-\mathcal N(q)\|_{H^r}
&=\|(1-p^2-pq-q^2)(p-q)\|_{H^r}\\
&\le C_{\Omega,d,r}^{(7)}
\|1-p^2-pq-q^2\|_{W^{r,\infty}}\|p-q\|_{H^r}\\
&\le C_{r,T,u^0}^{(4)}\|p-q\|_{H^r},
\end{aligned}
\end{equation}
whenever $p$ and $q$ satisfy the bounds in
\eqref{eq:eps2-intermediate-regularity}.
Recall that
\[
\mathcal S_{\mathcal N}(s)w=g_s(w),
\qquad
g_s(\zeta)=\frac{e^s\zeta}
{\sqrt{1+(e^{2s}-1)\zeta^2}}.
\]
The derivatives of $g_s$ are uniformly bounded for
$0\le s\le1$ and $|\zeta|\le1$. The chain rule therefore gives
\begin{equation}
\label{eq:eps2-linearized-nonlinear-flow}
\|g_s'(w)\|_{W^{r,\infty}}\le C_{r,T,u^0}^{(5)},
\qquad
\|g_s'(w)\eta\|_{H^r}\le C_{r,T,u^0}^{(6)}\|\eta\|_{H^r},
\qquad 0\le s\le1.
\end{equation}
For simplicity of notations, set
\[
z(t):=\mathcal S_{\mathcal L}\left(\frac t2\right)
\mathcal S_{\mathcal N}(t)
\mathcal S_{\mathcal L}\left(\frac t2\right)u(t_n),
\qquad
\mathcal R(t):=z'(t)-\mathcal H(z(t)),
\qquad 0\le t\le\tau.
\]
For each fixed $t$, write
\[
w(s):=\mathcal S_{\mathcal N}(s)\mathcal S_{\mathcal L}\left(\frac t2\right)u(t_n),
\qquad 0\le s\le t.
\]
The dependence of $w(s)$ on the fixed parameter $t$ is suppressed
for simplicity of notation. Differentiating $z(t)$, including the
dependence of the first linear half-step on $t$, gives
\begin{equation}
\label{eq:eps2-defect-decomposition}
\begin{aligned}
\mathcal R(t)
={}& \mathcal S_{\mathcal L}\left(\frac t2\right)\mathcal N(w(t))
-\mathcal N\left(\mathcal S_{\mathcal L}\left(\frac t2\right)w(t)\right)\\
&+\frac12 \mathcal S_{\mathcal L}\left(\frac t2\right)
\left[g_t'(w(0))\mathcal L(w(0))-\mathcal L(w(t))\right].
\end{aligned}
\end{equation}
We first consider the first line of
\eqref{eq:eps2-defect-decomposition}. Differentiating
$\mathcal S_{\mathcal L}(t/2-s)\mathcal N(\mathcal S_{\mathcal L}(s)w(t))$
with respect to $s$ yields
$-\mathcal S_{\mathcal L}(t/2-s)[\mathcal L,\mathcal N]
(\mathcal S_{\mathcal L}(s)w(t))$.
Applying the same calculation to $[\mathcal L,\mathcal N]$
and integrating twice, we obtain
\begin{equation}
\label{eq:eps2-linear-transport}
\begin{aligned}
&\mathcal S_{\mathcal L}\left(\frac t2\right)\mathcal N(w(t))
-\mathcal N\left(\mathcal S_{\mathcal L}\left(\frac t2\right)w(t)\right)\\
={}&\frac t2 \mathcal S_{\mathcal L}\left(\frac t2\right)
[\mathcal L,\mathcal N](w(t))
-
\int_0^{t/2}\left(\frac t2-s\right)
\mathcal S_{\mathcal L}\left(\frac t2-s\right)
[\mathcal L,[\mathcal L,\mathcal N]]
\bigl(\mathcal S_{\mathcal L}(s)w(t)\bigr)\,\mathrm ds.
\end{aligned}
\end{equation}
For the second line of \eqref{eq:eps2-defect-decomposition}, the function
$g_s'(w(0))\mathcal L(w(0))-\mathcal L(w(s))$
satisfies a linear differential equation with coefficient
$\mathcal N'(w(s))$, zero initial value, and source
$-[\mathcal L,\mathcal N](w(s))$.
The variation-of-constants formula gives
\[
g_t'(w(0))\mathcal L(w(0))-\mathcal L(w(t))
=-\int_0^t g_{t-s}'(w(s))
[\mathcal L,\mathcal N](w(s))\,\mathrm ds.
\]
Moreover, differentiating along the nonlinear flow gives
\[
\frac{\mathrm d}{\mathrm ds}
\left[g_{t-s}'(w(s))[\mathcal L,\mathcal N](w(s))\right]
=g_{t-s}'(w(s))
[\mathcal N,[\mathcal N,\mathcal L]](w(s)).
\]
Since $g_0'=1$, integrating this identity and exchanging the order of
integration yield
\begin{equation}
\label{eq:eps2-nonlinear-transport}
\begin{aligned}
&g_t'(w(0))\mathcal L(w(0))-\mathcal L(w(t))\\
=&-t[\mathcal L,\mathcal N](w(t))
+\int_0^t s\,g_{t-s}'(w(s))
[\mathcal N,[\mathcal N,\mathcal L]](w(s))\,\mathrm ds.
\end{aligned}
\end{equation}
Substituting \eqref{eq:eps2-linear-transport} and
\eqref{eq:eps2-nonlinear-transport} into
\eqref{eq:eps2-defect-decomposition}, the two terms containing
$[\mathcal L,\mathcal N](w(t))$ cancel. Hence,
\begin{equation}
\label{eq:eps2-defect-representation}
\begin{aligned}
\mathcal R(t)
={}&-\int_0^{t/2}\left(\frac t2-s\right)
\mathcal S_{\mathcal L}\left(\frac t2-s\right)
[\mathcal L,[\mathcal L,\mathcal N]]
\bigl(\mathcal S_{\mathcal L}(s)w(t)\bigr)\,\mathrm ds\\
&+\frac12 \mathcal S_{\mathcal L}\left(\frac t2\right)
\int_0^t s\,g_{t-s}'(w(s))
[\mathcal N,[\mathcal N,\mathcal L]](w(s))\,\mathrm ds.
\end{aligned}
\end{equation}
Combining \eqref{eq:eps2-linear-commutator-bound},
\eqref{eq:eps2-nonlinear-commutator-bound},
\eqref{eq:eps2-linearized-nonlinear-flow}, and the contractivity of
$\mathcal S_{\mathcal L}$, we derive
\begin{equation}
\label{eq:eps2-defect-bound}
\begin{aligned}
\|\mathcal R(t)\|_{H^r}
&\le C_{r,T,u^0}^{(7)}\varepsilon^4
\int_0^{t/2}\left(\frac t2-s\right)\,\mathrm ds
+C_{r,T,u^0}^{(8)}\varepsilon^2\int_0^t s\,\mathrm ds\\
&\le C_{r,T,u^0}^{(9)}(\varepsilon^4+\varepsilon^2)t^2
\le C_{r,T,u^0}^{(10)}\varepsilon^2t^2,
\qquad 0\le t\le\tau.
\end{aligned}
\end{equation}
Finally, $z(0)=u(t_n)$, and the variation-of-constants formula gives
\begin{align*}
z(t)-u(t_n+t)
=\int_0^t \mathcal S_{\mathcal L}(t-s)
\bigl[\mathcal N(z(s))-\mathcal N(u(t_n+s))
+\mathcal R(s)\bigr]\,\mathrm ds.
\end{align*}
Using \eqref{eq:eps2-nonlinear-lipschitz},
\eqref{eq:eps2-defect-bound}, and Gronwall's inequality, we obtain
\begin{align*}
\|z(t)-u(t_n+t)\|_{H^r}
&\le C_{r,T,u^0}^{(11)}\varepsilon^2
\int_0^t e^{C_{r,T,u^0}^{(12)}(t-s)}s^2\,\mathrm ds\\
&\le C_{r,T,u^0}^{(13)}\varepsilon^2t^3,
\qquad 0\le t\le\tau\le1.
\end{align*}
Taking $t=\tau$ and applying Lemma~\ref{lem:sobolev_equivalence} prove
\eqref{eq:eps2-local-error}.

\end{proof}

\begin{theorem}
\label{thm:global-error-eps2}
Consider the Allen--Cahn equation~\eqref{eq:AC} with the periodic boundary condition and the dimension $d\leq 3$. Let $r\ge0$ be an integer. Assume that $u^0\in H^{r+4}(\Omega)\cap W^{4,\infty}(\Omega)$ is periodic, $\|u^0\|_{L^\infty}\le1$, $0<\varepsilon\le1$, and $0<\tau\le1$.
Then, for a given time $T>0$, there exists a constant $C>0$,
depending only on $\Omega,\,d,\,r,\,T$, and $\|u^0\|_{H^{r+4}}$, such that
\begin{equation}
\label{eq:eps2-global-error}
\sup_{n\tau\le T}\|u^n-u(t_n)\|_{H^r}
\le C\varepsilon^2\tau^2.
\end{equation}
Moreover, in the case of $r=0$, we have the following $L^\infty$-norm
error estimate:
\begin{equation}
\label{eq:eps2-global-error-infinity}
\sup_{n\tau\le T}\|u^n-u(t_n)\|_{L^\infty}
\le\widetilde C\varepsilon^2\tau^2,
\end{equation}
where $\widetilde C$ depends only on $\Omega,\,d,\,T$, and
$\|u^0\|_{W^{4,\infty}}$.
\end{theorem}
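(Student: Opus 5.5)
The plan is the standard Lady Windermere's fan argument: combine the local error bound of Lemma~\ref{lem:local-error-eps2} with a Lipschitz stability estimate for the one-step map $\mathcal S(\tau)$ in $H^r$, valid uniformly along the numerical and exact trajectories. Write $e^n:=u^n-u(t_n)$. Then
\[
e^{n+1}=\bigl(\mathcal S(\tau)u^n-\mathcal S(\tau)u(t_n)\bigr)+\delta^{n+1}.
\]
The task is to show
\[
\|\mathcal S(\tau)v-\mathcal S(\tau)w\|_{r,\Lambda}\le e^{L\tau}\|v-w\|_{r,\Lambda}
\]
with $L$ independent of $\varepsilon$ and $n$. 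Once that holds, the discrete Gronwall inequality gives
\[
\|e^n\|_{r,\Lambda}\le \sum_{k\le n}e^{L(n-k)\tau}\|\delta^k\|_{r,\Lambda}\le nC\varepsilon^2\tau^3e^{LT}\le C\varepsilon^2\tau^2 .
\]
Lemma~\ref{lem:sobolev_equivalence} then converts this to the $H^r$ norm.

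For the stability step I will use that $\mathcal S_{\mathcal L}(\tau/2)$ is a contraction in every $|\cdot|_r$ and in $L^2$, so only the nonlinear flow needs attention. For $v,w$ with $\|\cdot\|_{L^\infty}\le1$,
\[
g_\tau(v)-g_\tau(w)=\Bigl(\int_0^1 g_\tau'(w+\theta(v-w))\,\mathrm d\theta\Bigr)(v-w).
\]
The derivatives of $g_\tau'$ satisfy $g_\tau'=1+O(\tau)$ together with all derivatives in $z$, uniformly for $|z|\le1$. Hence the multiplier equals $1+\tau a$, where $\|a\|_{W^{r,\infty}}$ is bounded by a polynomial in $\|v\|_{W^{r,\infty}},\|w\|_{W^{r,\infty}}$. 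This gives
\[
\|g_\tau(v)-g_\tau(w)\|_{H^r}\le(1+C\tau)\|v-w\|_{H^r},
\]
which is the required factor $e^{L\tau}$ after passing to the equivalent norm. This step needs a $W^{r,\infty}$ bound on the intermediate states that is independent of $\varepsilon$ and $n$ on $[0,T]$. For the exact solution it follows from the $\varepsilon$-independent Gronwall bound \eqref{eq:eps2-exact-regularity} in $H^{r+4}$ and the embedding $H^{r+2}\hookrightarrow W^{r,\infty}$ for $d\le3$. For the numerical solution, the same finite-time argument as \eqref{eq:eps2-subflow-regularity} applied step by step gives $\|u^n\|_{H^{r+4}}\le e^{Cn\tau}\|u^0\|_{H^{r+4}}$, uniformly in $\varepsilon$. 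So I will not rely on Theorem~\ref{thm:uniform bound}, whose bounds degrade like $\varepsilon^{-r}$.

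The main obstacle is the mismatch of norms between $H^r$ and the spectral norm: the linear flow contracts exactly in $\|\cdot\|_{r,\Lambda}$, while the multiplier estimate is naturally phrased in $H^r$. Applying the equivalence constants at every step would put $(\tilde c_r/c_r)^n$ into the Gronwall constant. To avoid this, I will carry out the whole stability argument in $\|\cdot\|_{H^r}$ as defined in \eqref{def:Hknorm}. On the periodic domain, $\mathcal S_{\mathcal L}$ commutes with $D^\alpha$ and contracts in $L^2$, so it is also a contraction in $H^r$. The equivalence from Lemma~\ref{lem:sobolev_equivalence} is then used only once, on the local error.

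For the $L^\infty$ estimate \eqref{eq:eps2-global-error-infinity}, I will repeat the argument directly in $L^\infty$. $\mathcal S_{\mathcal L}$ is an $L^\infty$ contraction, being convolution with a positive kernel of mass one, and $g_\tau$ is Lipschitz with constant $1+C\tau$ on $[-1,1]$. The local error in $L^\infty$ is controlled by redoing the commutator estimates \eqref{eq:eps2-linear-commutator}--\eqref{eq:eps2-nonlinear-commutator} in $L^\infty$, using $W^{4,\infty}$ bounds on the intermediate states. Those bounds come from the maximum principle applied to derivatives, which holds for the heat semigroup and is Gronwall-type for the nonlinear flow. Summing over steps as above then gives $\widetilde C\varepsilon^2\tau^2$.
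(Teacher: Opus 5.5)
Your proposal is correct and follows the paper's proof. Both use the same Lady Windermere's fan decomposition and the same $\varepsilon$-independent $H^{r+4}$ regularity of the numerical and exact solutions (the numerical bound obtained by iterating the per-step $e^{C\tau}$ growth). Like the paper, you carry out the stability step directly in $\|\cdot\|_{H^r}$, invoke the norm equivalence only on the local error, and handle $L^\infty$ by redoing the argument with the commutator formulas, $L^\infty$-contractivity of $\mathcal S_{\mathcal L}$, and $\sup|g_\tau'|\le e^\tau$.

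The differences are only technical. You get the $(1+C\tau)$ Lipschitz factor of $g_\tau$ from the mean-value multiplier $1+\tau a$, whereas the paper applies Gronwall to the integral form using the $H^r$-Lipschitz bound for $\mathcal N$. For the $L^\infty$ part, you obtain $W^{2,\infty}$ bounds on the intermediate states from the maximum principle for derivatives; the paper gets them from $H^4\hookrightarrow W^{2,\infty}$ together with $\|u^0\|_{H^4}\le C_\Omega\|u^0\|_{W^{4,\infty}}$.
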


\begin{proof}
Throughout the following argument, $C_{r,T,u^0}>0$ denotes a generic
constant depending only on $\Omega,\,d,\,r,\,T$, $\|u^0\|_{H^{r+4}}$, while \(C_{\Omega,d,m}>0\) denotes a generic constant depending only on $\Omega,\,d,\,m$.

We first establish the regularity bound for the numerical solution. Let $m=r+4$. The discrete maximum principle gives $\|u^n\|_{L^\infty}\le1$ for all $n\ge0$.
By \eqref{eq:eps2-subflow-regularity},
\[
\|u^{n+1}\|_{H^m}
\le e^{C_{\Omega,d,m}^{(1)}\tau}\|u^n\|_{H^m}.
\]
Iteration yields
\begin{equation}
\label{eq:eps2-numerical-regularity}
\|u^n\|_{H^m}
\le e^{C_{\Omega,d,m}^{(2)}n\tau}\|u^0\|_{H^m},
\qquad
\sup_{n\tau\le T+1}\|u^n\|_{H^{r+4}}
\le C_{r,T,u^0}^{(1)}.
\end{equation}
Together with \eqref{eq:eps2-exact-regularity} and
\eqref{eq:eps2-subflow-regularity}, this estimate gives the same
$\varepsilon$-independent regularity bounds for the exact and numerical
intermediate states used below.
By the triangle inequality,
\begin{equation}
\label{eq:eps2-error-decomposition}
\begin{aligned}
\|u^{n+1}-u(t_{n+1})\|_{H^r}
\le{}&\|\mathcal S(\tau)u^n-\mathcal S(\tau)u(t_n)\|_{H^r}\\
&+\|\mathcal S(\tau)u(t_n)-u(t_{n+1})\|_{H^r}.
\end{aligned}
\end{equation}
Using Lemma~\ref{lem:sobolev_equivalence} and Lemma~\ref{lem:local-error-eps2}, we obtain
\begin{equation}
\label{eq:eps2-consistency-for-global}
\|\mathcal S(\tau)u(t_n)-u(t_{n+1})\|_{H^r}
\le C_{r,T,u^0}^{(2)}\varepsilon^2\tau^3,
\qquad n\tau\le T.
\end{equation}

Let
\[
v^n:=\mathcal S_{\mathcal L}\left(\frac\tau2\right)u^n,
\qquad
w^n:=\mathcal S_{\mathcal L}\left(\frac\tau2\right)u(t_n).
\]
The maximum principle, \eqref{eq:eps2-exact-regularity},
\eqref{eq:eps2-subflow-regularity}, and
\eqref{eq:eps2-numerical-regularity} imply
\[
\max\{\|v^n\|_{L^\infty},\|w^n\|_{L^\infty}\}\le1,
\qquad
\|v^n\|_{H^{r+4}}+\|w^n\|_{H^{r+4}}
\le C_{r,T,u^0}^{(3)}.
\]
The same bounds hold, up to a change in the constant, for
$\mathcal S_{\mathcal N}(s)v^n$ and $\mathcal S_{\mathcal N}(s)w^n$, $0\le s\le\tau$.
Using the integral formulation of the nonlinear subproblem and
\eqref{eq:eps2-nonlinear-lipschitz}, we have
\begin{align*}
&\|\mathcal S_{\mathcal N}(s)v^n-\mathcal S_{\mathcal N}(s)w^n\|_{H^r}\\
\le&\|v^n-w^n\|_{H^r}
+C_{r,T,u^0}^{(4)}\int_0^s
\|\mathcal S_{\mathcal N}(\sigma)v^n-\mathcal S_{\mathcal N}(\sigma)w^n\|_{H^r}
\,\mathrm d\sigma.
\end{align*}
Gronwall's inequality therefore gives
\[
\|\mathcal S_{\mathcal N}(\tau)v^n-\mathcal S_{\mathcal N}(\tau)w^n\|_{H^r}
\le e^{C_{r,T,u^0}^{(5)}\tau}\|v^n-w^n\|_{H^r}.
\]
By the contractivity of the two linear half-steps,
\begin{equation}
\label{eq:eps2-splitting-stability}
\|\mathcal S(\tau)u^n-\mathcal S(\tau)u(t_n)\|_{H^r}
\le e^{C_{r,T,u^0}^{(6)}\tau}\|u^n-u(t_n)\|_{H^r}.
\end{equation}
Combining \eqref{eq:eps2-error-decomposition},
\eqref{eq:eps2-consistency-for-global}, and
\eqref{eq:eps2-splitting-stability}, we derive
\begin{equation}
\label{eq:eps2-error-recursion}
\|u^{n+1}-u(t_{n+1})\|_{H^r}
\le e^{C_{r,T,u^0}^{(7)}\tau}\|u^n-u(t_n)\|_{H^r}
+C_{r,T,u^0}^{(8)}\varepsilon^2\tau^3.
\end{equation}
Since $u^0=u(0)$, iteration of \eqref{eq:eps2-error-recursion} gives
\begin{align*}
\|u^n-u(t_n)\|_{H^r}
&\le C_{r,T,u^0}^{(9)}\varepsilon^2\tau^3
\sum_{j=0}^{n-1}e^{C_{r,T,u^0}^{(10)}j\tau}\\
&\le C_{r,T,u^0}^{(11)}\varepsilon^2\tau^3 n
e^{C_{r,T,u^0}^{(12)}T}
\le C\varepsilon^2\tau^2,
\qquad n\tau\le T.
\end{align*}
Taking the supremum over $n\tau\le T$ proves
\eqref{eq:eps2-global-error}.

Finally, we prove \eqref{eq:eps2-global-error-infinity}.
In the remainder of the proof, $C_{T,u^0}>0$ denotes a generic constant depending only on $\Omega,d,T$, $\|u^0\|_{W^{4,\infty}}$, and independent of $n,\tau$, $\varepsilon$.
Applying \eqref{eq:eps2-exact-regularity} with $m=4$, using
$H^4(\Omega)\hookrightarrow W^{2,\infty}(\Omega)$ for $d\le3$, and
noting that $\|u^0\|_{H^4}\le C_\Omega\|u^0\|_{W^{4,\infty}}$,
we find that all intermediate states in
\eqref{eq:eps2-defect-representation} satisfy
\[
\|w\|_{W^{2,\infty}}\le C_{T,u^0}^{(1)}.
\]
The explicit commutator formulas
\eqref{eq:eps2-linear-commutator}--\eqref{eq:eps2-nonlinear-commutator}
therefore imply
\[
\|[\mathcal L,[\mathcal L,\mathcal N]](w)\|_{L^\infty}
\le C_{T,u^0}^{(2)}\varepsilon^4,
\qquad
\|[\mathcal N,[\mathcal N,\mathcal L]](w)\|_{L^\infty}
\le C_{T,u^0}^{(3)}\varepsilon^2.
\]
Moreover, $\sup_{\zeta\in\mathbb R}|g_s'(\zeta)|\le e^s$.
Using \eqref{eq:eps2-defect-representation} and the
$L^\infty$-contractivity of $\mathcal S_{\mathcal L}$, we obtain
\[
\|\mathcal R(t)\|_{L^\infty}
\le C_{T,u^0}^{(4)}\varepsilon^2t^2,
\qquad 0\le t\le\tau.
\]
Since $\mathcal N$ is Lipschitz on $[-1,1]$, the same
variation-of-constants argument as in the local error proof gives
\[
\|\delta^{n+1}\|_{L^\infty}
\le C_{T,u^0}^{(5)}\varepsilon^2\tau^3,
\qquad n\tau\le T.
\]
The pointwise estimate for $g_\tau'$ and the contractivity of
$\mathcal S_{\mathcal L}$ also give
\[
\|\mathcal S(\tau)u^n-\mathcal S(\tau)u(t_n)\|_{L^\infty}
\le e^\tau\|u^n-u(t_n)\|_{L^\infty}.
\]
Consequently,
\[
\|u^{n+1}-u(t_{n+1})\|_{L^\infty}
\le e^\tau\|u^n-u(t_n)\|_{L^\infty}
+C_{T,u^0}^{(6)}\varepsilon^2\tau^3.
\]
Iteration, together with $u^0=u(0)$, yields
\[
\|u^n-u(t_n)\|_{L^\infty}
\le C_{T,u^0}^{(7)}\varepsilon^2\tau^3
\sum_{j=0}^{n-1}e^{j\tau}
\le\widetilde C\varepsilon^2\tau^2,
\qquad n\tau\le T.
\]
This proves \eqref{eq:eps2-global-error-infinity} and completes the proof.
\end{proof}

\begin{remark}
Previous work \cite{quan2026stability} established stability and error estimates of the Strang splitting method with nonuniform time meshes under homogeneous Neumann boundary conditions, without providing explicit dependence on $\varepsilon$. Due to the lack of an energy dissipation law in the case of nonuniform time meshes, it is unknown how to establish error estimates with optimal dependence on $\varepsilon$.
\end{remark}

\section{Numerical Experiments}\label{sec:numerical}
In this section, we compare the theoretical predictions with
numerical results for the Strang splitting method~\eqref{eq:strang}.
We examine the second-order temporal convergence, the dependence
of the temporal errors on $\varepsilon$, the maximum principle, and modified-energy dissipation.
We also investigate long-time Sobolev behavior and compare the
observed \(\varepsilon\)-dependence with the theoretical stability bounds.

The discrete Sobolev seminorm $ |\cdot|_{r,N}$ is defined by
\begin{equation}
 |u^n|_{r,N}^2=\sum_{j=0}^{N-1}\lambda_j^r|\widehat u_j^n|^2,
 \label{eq:discrete-seminorm}
\end{equation}
where $\lambda_j$ is the eigenvalue of $-\Delta$ given in  \eqref{def:eigenvalue} and $\widehat u_j^n$ are Fourier or cosine expansion coefficients of $u^n$. Here, $N$ denotes the total number of grid points.

\subsection{Temporal accuracy and long-time Sobolev bounds}

We consider the Allen--Cahn equation \eqref{eq:AC} in
$\Omega=(0,2\pi)^2$ with periodic boundary conditions and
\[
\begin{aligned}
u^0(x_1,x_2)=\cos(x_1)\cos(x_2).
\end{aligned}
\]
The initial condition is smooth, periodic, and independent of
$\varepsilon$. For periodic boundary conditions, we employ the
Fourier spectral method.

We first investigate the temporal accuracy with $\varepsilon=0.1$.
We use a $512\times512$ spatial mesh and take different time steps
\[
 \tau=\frac1{32},\ \frac1{64},\ \frac1{128},\
      \frac1{256},\ \frac1{512}.
\]
The reference solution is computed on the same spatial mesh with
a maximum time step $\tau_{\rm ref}=10^{-5}$.
Table~\ref{tab:periodic-accuracy} reports $L^2$, $H^1$, and $H^2$ errors defined in \eqref{def:seminorm} at $T=1$.
The observed rates are close to two, demonstrating the second-order temporal accuracy in this test.

\begin{table}[h!]
  \centering
  \caption{Temporal errors and convergence rates at $T=1$ under periodic boundary conditions.}
  \label{tab:periodic-accuracy}
  \input{example2_periodic_accuracy_table}
\end{table}

We next test the $\varepsilon$-dependence of the temporal errors.
Take a $512\times512$ spatial mesh, $\tau=0.01$, $T=1$, and
\[
  \varepsilon= 0.4,\ 0.2,\ 0.1,\ 0.05,\ 0.025,\ 0.0125,
\]
respectively. The same initial condition is used for all values
of $\varepsilon$. For each $\varepsilon$, the reference solution
is computed with
\[
  \tau_{\rm ref}=\frac{\tau}{64}=1.5625\times10^{-4}.
\]
We compute numerically the $H^r$ errors as follows:
\[
  E_{H^r}(\varepsilon)
  =\max_{0\le n\tau\le T}\bigl\|u^n-u_{\rm ref}(t_n)\bigr\|_{H^r},
  \qquad r=0,1,2.
\]
Figure~\ref{fig:periodic-error-scaling} plots
$\log E_{H^r}(\varepsilon)$ against $-\log\varepsilon$
for $r=0,1,2$.
For the smaller values of $\varepsilon$, the error curves become
approximately parallel to the reference lines of slope $-2$.
This behavior shows close agreement with the $\varepsilon^2$
scaling in the error bound of Theorem~\ref{thm:global-error-eps2}.

\begin{figure}[h!]
  \centering
  \includegraphics[width=0.98\textwidth]
    {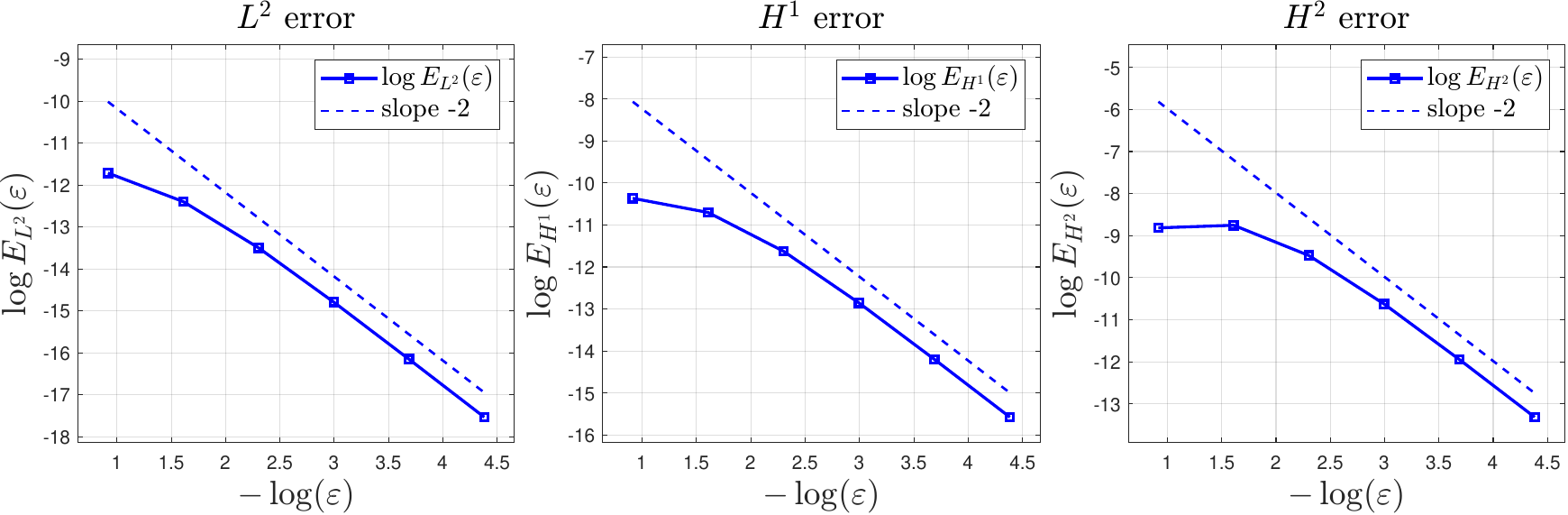}
  \caption{$\log E_{H^r}(\varepsilon)$ against
  $-\log\varepsilon$ for $r=0,1,2$ for the Allen--Cahn equation
  with periodic boundary conditions.
  The dashed lines indicate reference slopes $-2$.}
  \label{fig:periodic-error-scaling}
\end{figure}

We further investigate the long-time behavior of the
Strang splitting method. Take a $512\times512$ spatial mesh,
$\varepsilon=0.1$, $\tau=0.01$, and $T=500$.
Figure~\ref{fig:periodic-long-time-2d} illustrates the instantaneous seminorms $|u^n|_{r,N}$, $r=1,2,3$,
with respect to time.
It can be observed that these seminorms remain bounded
over the simulated time interval, providing numerical evidence consistent with the uniform-in-time Sobolev boundedness.

\begin{figure}[h!]
  \centering
  \includegraphics[width=0.4\textwidth]
    {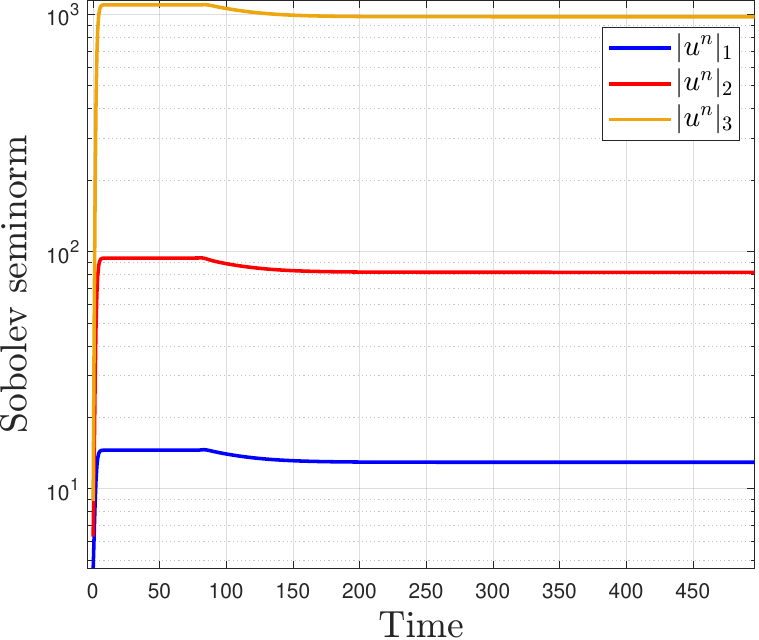}
  \caption{Long-time evolution of $|u^n|_{r,N}$,
  $r=1,2,3$, for the Allen--Cahn equation
  with periodic boundary conditions.}
  \label{fig:periodic-long-time-2d}
\end{figure}

\subsection{Maximum principle, energy dissipation, and long-time Sobolev bounds}

We use a seven-circle phase-field profile on $\Omega=(0,2\pi)^2$ as the initial condition of the Allen--Cahn equation with periodic boundary condition:
\[
 u^0(x,y)=0.98\tanh\left(
 -\frac{d(x,y)}{0.05\sqrt{2}}\right),
\]
where
$$
\begin{aligned}
d(x,y)&=\min_{1\leq j\leq7}
 \left(\sqrt{(x-x_j)^2+(y-y_j)^2}-\rho_j\right),\\
(x_j)_{j=1}^7 &=\left(\frac\pi2,\frac\pi4,\frac\pi2,\pi, \frac{3\pi}2,\pi,\frac{3\pi}2\right), \\ 
(y_j)_{j=1}^7 &=\left(\frac\pi2,\frac{3\pi}4,\frac{5\pi}4, \frac\pi4,\frac\pi4,\pi,\frac{3\pi}2\right),\\ 
(\rho_j)_{j=1}^7 &=\left(\frac\pi5,\frac{2\pi}{15},\frac{2\pi}{15},\frac\pi{10},\frac\pi{10},\frac\pi4,\frac\pi4\right).
\end{aligned}
$$

The Fourier pseudospectral method is employed for spatial discretization.
For the maximum-principle and energy dissipation tests, we take a $512\times 512$ spatial mesh, $\varepsilon=0.1$, $\tau=0.01$, and $T=50$.
Figure~\ref{fig:max-energy} shows that the maximum norm of the numerical solution remains bounded by one, while both the original Allen--Cahn energy and the modified energy $\widetilde E^n$ defined in \eqref{eq:energy} decay in time.

\begin{figure}[h!]
 \centering
 \includegraphics[width=0.7\textwidth]{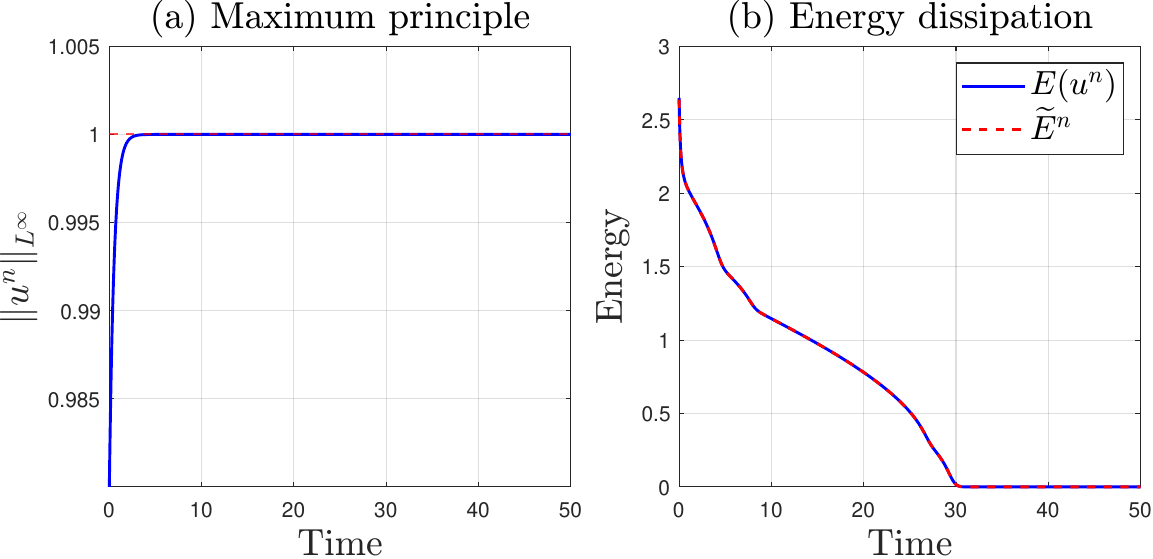}
 \caption{Maximum principle and energy dissipation of the Strang splitting method for the Allen--Cahn equation with periodic boundary condition and seven-circle initial condition.}
 \label{fig:max-energy}
\end{figure}

\begin{figure}[h!]
 \centering
 \includegraphics[width=0.7\textwidth]{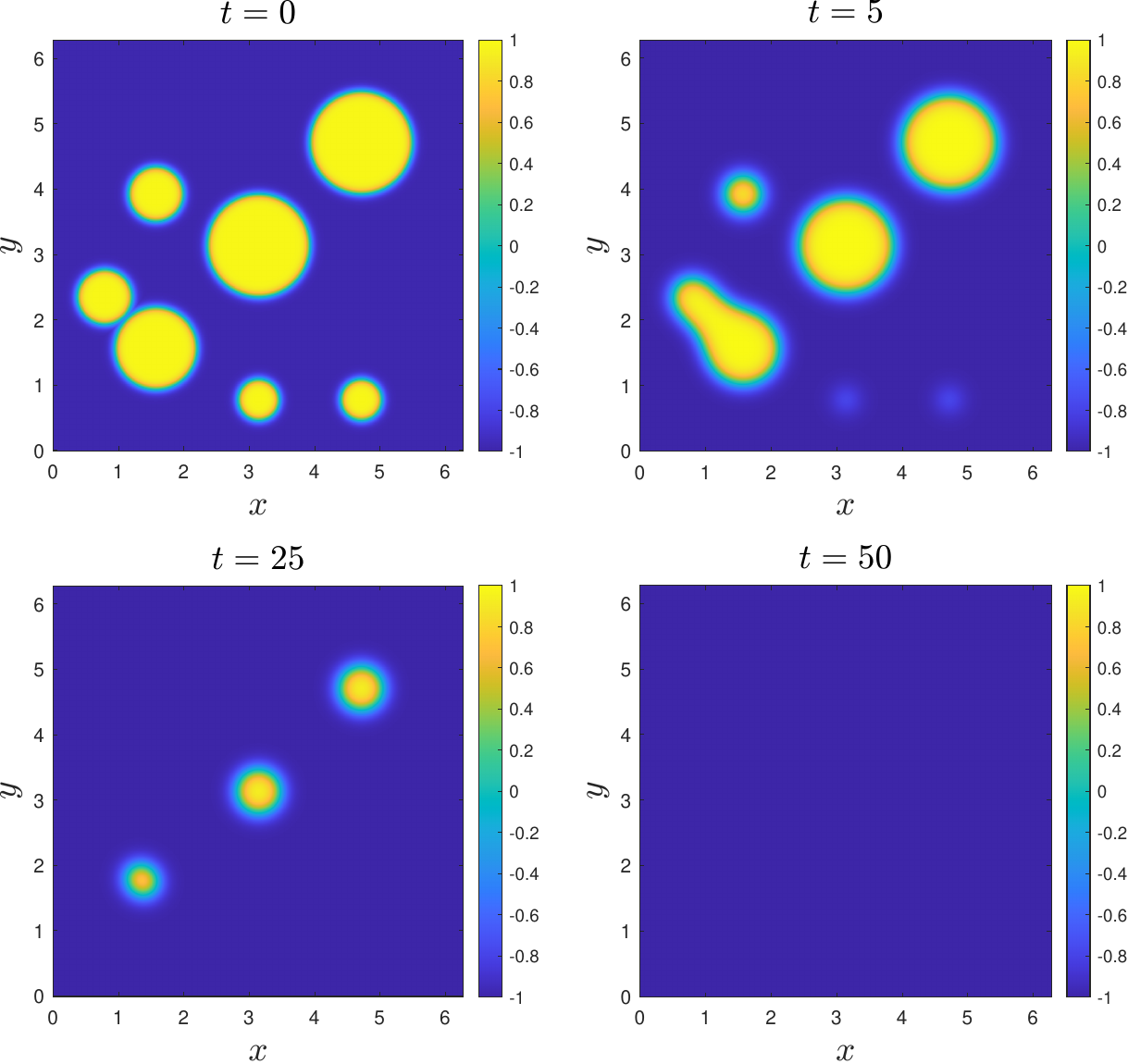}
 \caption{Seven-circle phase-field evolution at $t=0,5,25,$ and $50$. It can be observed that small components disappear and neighboring components merge.}
 \label{fig:seven-circle}
\end{figure}

We now test the $\varepsilon$-dependence of $R_r(\varepsilon)$.
Take a \(512\times512\) spatial mesh, $\tau=0.01$, $T=50$, and
\(
 \varepsilon= 0.05,\ 0.025,\ 0.0125,\ 0.00625,
\)
respectively. The truncation number in \eqref{eq:discrete-seminorm} is $N= 512^2$ and we compute numerically $R_r(\varepsilon)$.
Figure~\ref{fig:epsilon-scaling2} plots
$\log R_r(\varepsilon)$ against $-\log\varepsilon$ for $r=1,2,3$.
It can be observed that the seminorms of $\log R_r(\varepsilon)$ are approximately linear w.r.t. $-\log\varepsilon$, with slopes less than the upper bounds in \eqref{eq:explicit_Lambda_eps_bound}.

\begin{figure}[h!]
 \centering
\includegraphics[width=0.98\textwidth]{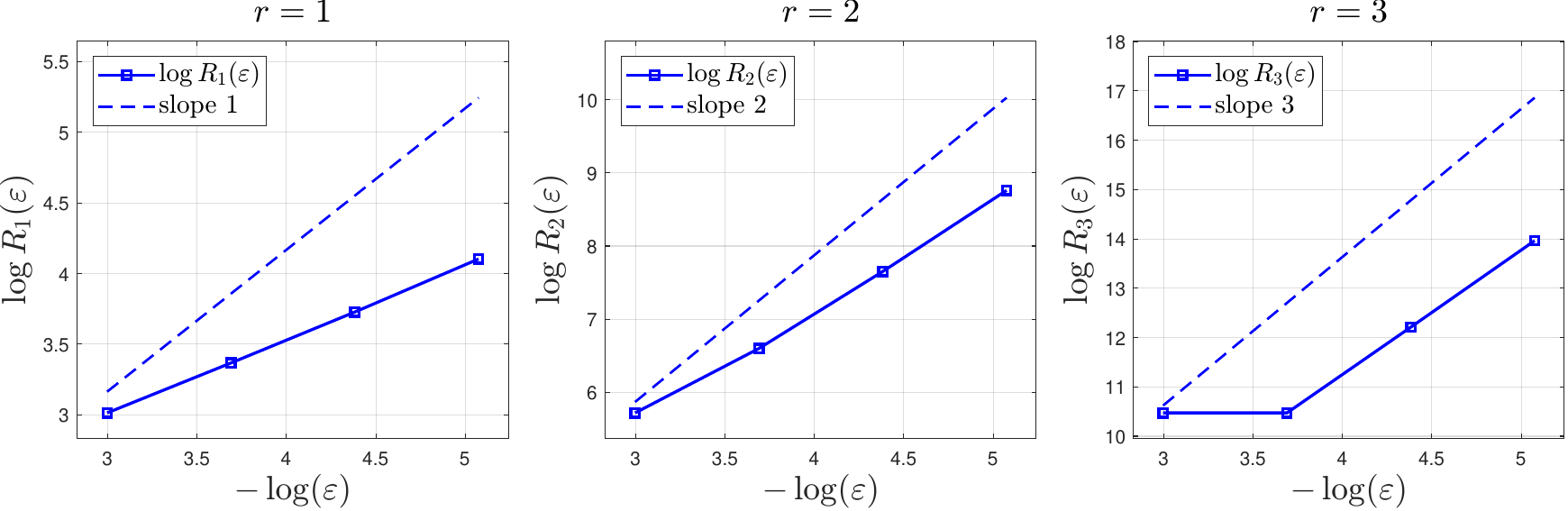}
 \caption{$\log R_r(\varepsilon)$ against $-\log\varepsilon$ for $r=1,2,3$  for the Allen--Cahn equation with periodic boundary conditions.}
 \label{fig:epsilon-scaling2}
\end{figure}

Take the $512\times 512$ mesh as before,
$\varepsilon=0.1$, $\tau=0.01$, and $T=500$.
Figure~\ref{fig:long-time-2d} illustrates instantaneous seminorms $|u^n|_{r,N}$, $r=1,2,3$ w.r.t. the time. It can be observed that these seminorms remain bounded over long time, validating the uniform-in-time boundedness.

\begin{figure}[h!]
 \centering
 \includegraphics[width=0.4\textwidth]{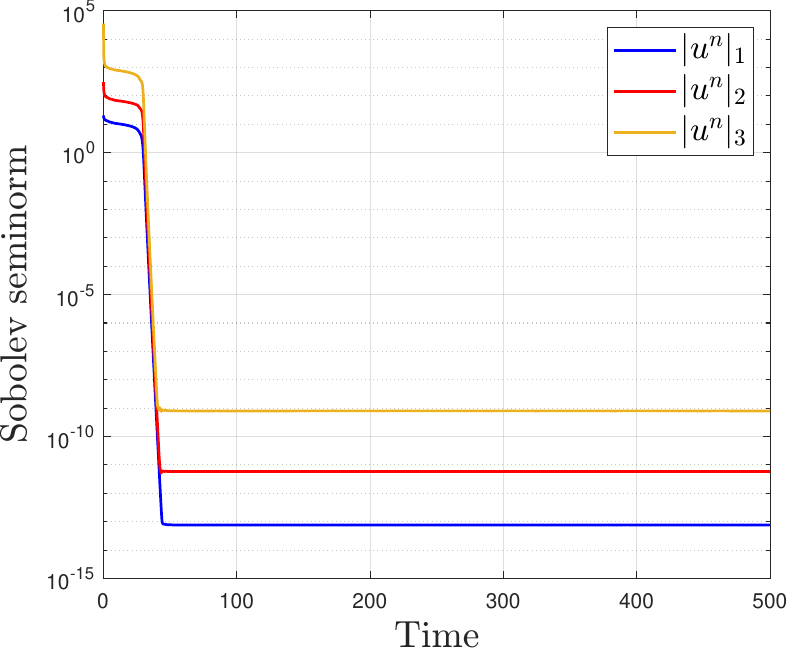}
 \caption{Long-time evolution of $|u^n|_{r,N}$, $r=1,2,3$ for the Allen--Cahn equation with periodic boundary conditions.}
 \label{fig:long-time-2d}
\end{figure}

\section{Conclusion}\label{sec:conclusion}

In this work, we establish long-time Sobolev stability and error estimates with optimal $\varepsilon$-dependence for the Strang splitting method applied to the AC equation with periodic boundary conditions. We prove that the numerical solution admits uniform $H^r$-seminorm bounds $\mathcal O(\varepsilon^{-r}+1)$, which is consistent with the Sobolev bound of the exact solution. In addition, \(H^r\)-norm error estimate is established to be $\mathcal O(\varepsilon^2 \tau^2)$ on any prescribed finite time interval, which improves existing works with polynormial dependence on $\varepsilon^{-1}$. Numerical experiments verify temporal accuracy, maximum principle, energy dissipation law, uniform-in-time boundedness, and \(\varepsilon\)-dependent Sobolev error estimate. 

Error estimates that explicitly track the dependence on \(\varepsilon\) have also been investigated extensively for other equations, such as the epitaxial thin-film equation \cite{LiQiao2016,qiao2017error} and the Cahn--Hilliard equation \cite{guo2021improved, guo2024refined}. A natural direction for future work is to improve the dependence on $\varepsilon^{-1}$ for numerical methods of different gradient flows.


\section*{Acknowledgments}
 
C. Quan is supported by National Natural Science Foundation of
China (Grant No. 12622124), Guangdong Basic and Applied Basic Research Foundation (Grant
No. 2023B1515020030), and Shenzhen Science and Technology Innovation Program (Grant No.
JCYJ20230807092402004).
Z. Tan is supported by the National Nature Science Foundation of China
(12371418), Guangdong Natural Science Foundation (2024A1515010694, 2022A1515010426), and Guangdong Province Key Laboratory of Computational Science at the Sun Yat-sen University (2020B1212060032).

\appendix
\section{Proof of Lemma~\ref{lem:longtime-bound}}
\label{append:1}

Throughout the proof, \(C>0\) denotes a generic constant which may change
from line to line. Unless otherwise stated, the constants are independent
of \(t\) and of the final time \(T\). The parameter \(\varepsilon>0\) is fixed.

We first prove the first-order Sobolev bound. By the energy dissipation, we have
\begin{equation}
    \frac{\varepsilon^2}{2}\|\nabla u(t)\|_{L^2}^2
    \le
    E(u^0),
    \qquad \forall t\ge0.
    \label{eq:H1_energy_appendix}
\end{equation}
and therefore,
\begin{equation}
    |u(t)|_1
    =
    \|\nabla u(t)\|_{L^2}
    \le
    \frac{\sqrt{2E(u^0)}}{\varepsilon} \leq C_1 (\varepsilon^{-1}+1),
    \quad \forall t\ge0,
    \label{eq:H1_bound_appendix}
\end{equation}
where $C_1$ depends only on \(\Omega\), \(d\), and \(|u^0|_1\).

Assume now that, for some \(m\ge2\), the estimate has been proved at
order \(m-1\), namely,
\begin{equation}
    |u(t)|_{m-1}
    \le
    C_{m-1}\left(\varepsilon^{-(m-1)}+1\right),
    \qquad \forall t\ge0.
    \label{eq:induction_hypothesis_exact_appendix}
\end{equation}
Applying \(\Lambda^m\) to the Allen--Cahn equation \eqref{eq:AC}
and taking the \(L^2\)-inner product with \(\Lambda^m u\), we obtain
\begin{equation}
\begin{aligned}
    \frac12\frac{\rm d}{{\rm d}t}|u(t)|_m^2
    &=
    \varepsilon^2
    \left\langle
        \Lambda^m\Delta u,
        \Lambda^m u
    \right\rangle
    +
    \left\langle
        \Lambda^m(u-u^3),
        \Lambda^m u
    \right\rangle                                      \\
    &=
    -\varepsilon^2 |u(t)|_{m+1}^2
    +
    \left\langle
        \Lambda^m(u-u^3),
        \Lambda^m u
    \right\rangle .
\end{aligned}
\label{eq:m_energy_start_appendix}
\end{equation}
By the self-adjointness of \(\Lambda\), we have
\[
\begin{aligned}
    \left\langle
        \Lambda^m(u-u^3),
        \Lambda^m u
    \right\rangle
    &=
    \left\langle
        \Lambda^{m-1}(u-u^3),
        \Lambda^{m+1}u
    \right\rangle                                      \\
    &\le
    \left(
        |u(t)|_{m-1}
        +
        |u^3(t)|_{m-1}
    \right)
    |u(t)|_{m+1}.
\end{aligned}
\label{eq:nonlinear_m_appendix}
\]
By the norm equivalence in Lemma~\ref{lem:sobolev_equivalence} and
and the maximum principle of $u$, we have
\begin{equation}
    |u^3(t)|_{m-1}
    \le
    C_{\Omega,d,m}^{(1)}
    \left(
        |u(t)|_{m-1}+1
    \right)
    \label{eq:cubic_moser_appendix}
\end{equation}
and consequently,
\begin{equation}
\begin{aligned}
    \left\langle
        \Lambda^m(u-u^3),
        \Lambda^m u
    \right\rangle
    &\le
    C_{\Omega,d,m}^{(2)}
    \left(
        |u(t)|_{m-1}+1
    \right)
    |u(t)|_{m+1}                                      \\
    &\le
    \frac{\varepsilon^2}{2}|u(t)|_{m+1}^2
    +
    \frac{C_{\Omega,d,m}}{2\varepsilon^2}
    \left(
        |u(t)|_{m-1}+1
    \right)^2,
\end{aligned}
\label{eq:young_m_appendix}
\end{equation}
where $C_{\Omega,d,m}^{(1)},~C_{\Omega,d,m}^{(2)}$, and $C_{\Omega,d,m}$ are postive constants depend only on $\Omega,~d,$ and $m$.

Substituting \eqref{eq:young_m_appendix} into
\eqref{eq:m_energy_start_appendix}, we get
\begin{equation}
    \frac{\rm d}{{\rm d} t}|u(t)|_m^2
    +
    \varepsilon^2 |u(t)|_{m+1}^2
    \le
    \frac{C_{\Omega,d,m}}{\varepsilon^2}
    \left(
        |u(t)|_{m-1}+1
    \right)^2 \leq \frac{C_{\Omega,d,m}}{\varepsilon^2}
    B_{m-1}^2,
    \quad \forall t\ge0
    \label{eq:m_energy_ineq_appendix}
\end{equation}
with
\[
    B_{m-1}
    :=
    C_{m-1}\left(\varepsilon^{-(m-1)}+1\right)+1.
\]
The Sobolev interpolation inequality says that 
\[
\begin{aligned}
    |u(t)|_m^2
    &=
    \sum_{j\ge1}\lambda_j^m |u_j(t)|^2 
    \le
    \left(
        \sum_{j\ge1}\lambda_j^{m-1}|u_j(t)|^2
    \right)^{1/2}
    \left(
        \sum_{j\ge1}\lambda_j^{m+1}|u_j(t)|^2
    \right)^{1/2} \\
    & =
    |u(t)|_{m-1}|u(t)|_{m+1}
\end{aligned}
\]
that implies from \eqref{eq:induction_hypothesis_exact_appendix}
\[
    |u(t)|_m^2
    \le
    B_{m-1}|u(t)|_{m+1},
\]
i.e.,
\begin{equation}
    |u(t)|_{m+1}^2
    \ge
    \frac{|u(t)|_m^4}{B_{m-1}^2}.
    \label{eq:interpolation_damping_appendix}
\end{equation}
Substituting \eqref{eq:interpolation_damping_appendix} into
\eqref{eq:m_energy_ineq_appendix}, 
we obtain the differential inequality
\begin{equation}
    \frac{\rm d}{{\rm d}t}|u(t)|_{m}^2
    +
    \frac{\varepsilon^2}{B_{m-1}^2}|u(t)|_{m}^4
    \le
    \frac{C_{\Omega,d,m}}{\varepsilon^2}B_{m-1}^2 .
    \label{eq:riccati_m_appendix}
\end{equation}
By the comparison principle, we have 
\[
    |u(t)|_m^2
    \le
    \max\left\{
        |u^0|_m^2,
        C_{\Omega,d,m}^{\frac12}
        \frac{B_{m-1}^2}{\varepsilon^2}
    \right\}.
\]
Therefore,
\begin{equation}
    |u(t)|_m
    \le
    C_m
    \left(
        \varepsilon^{-m}+1
    \right),
    \quad \forall t\ge0 .
    \label{eq:m_final_bound_with_epsilon_appendix}
\end{equation}
Here \(C_m>0\) is independent of \(t\), \(T\), and \(\varepsilon\), and
depends only on \(\Omega\), \(d\), \(m\), and \(|u^0|_m\). This closes the induction and completes the proof of Lemma~\ref{lem:longtime-bound}.

\bibliographystyle{siamplain}
\bibliography{references}
\end{document}

%% file: example2_periodic_accuracy_table.tex
\begin{tabular}{rrrrrrr}
\toprule
$\tau$ & $L^2$ error & rate & $H^1$ error & rate & $H^2$ error & rate \\
\midrule
$\frac{1}{32}$ & 1.3418e-05 & -- & 8.7135e-05 & -- & 7.5043e-04 & -- \\
$\frac{1}{64}$ & 3.3545e-06 & 2.0000 & 2.1786e-05 & 1.9999 & 1.8764e-04 & 1.9997 \\
$\frac{1}{128}$ & 8.3865e-07 & 2.0000 & 5.4465e-06 & 2.0000 & 4.6913e-05 & 1.9999 \\
$\frac{1}{256}$ & 2.0967e-07 & 1.9999 & 1.3616e-06 & 2.0000 & 1.1728e-05 & 2.0000 \\
$\frac{1}{512}$ & 5.2431e-08 & 1.9997 & 3.4042e-07 & 2.0000 & 2.9321e-06 & 2.0000 \\
\bottomrule
\end{tabular}